\newtheorem{thm}{Theorem}[section]
\newtheorem{lem}[thm]{Lemma}
\newtheorem{prop}[thm]{Proposition}
\newtheorem{cor}[thm]{Corollary}
\DeclareMathOperator{\id}{id}
\newcommand{\R}{\mathbb{R}}
\newcommand{\Z}{\mathbb{Z}} 
\newcommand{\N}{\mathbb{N}}
\newcommand{\C}{\mathbb{C}} 
\newcommand{\td}{\mathbb{T}^d}
\newcommand{\torus}{\mathbb{T}}
\newcommand{\T}{\mathbb{T}}
\newcommand{\TR}{\mathbb{T}^d\times\mathbb{R}^d}
\newcommand{\rd}{\mathbb{R}^d} 
\newcommand{\oir}{\mathcal{O}^\infty(I)}
\newcommand{\dioph}{\Omega^\tau_\gamma}
\newcommand{\hone}{\hspace{1cm}}
\newcommand{\DC}{\textup{DC}(\gamma, \tau)}
\newcommand{\DCd}{\textup{DC}_d(\gamma, \tau)}
\newcommand{\SE}[3]{\mathcal{SE}^d_{r, s}}
\newcommand{\Function}[5]{\begin{array}{cccc} #1 : & #2 & \rightarrow & #3 \\ & #4 & \mapsto & #5 \end{array}}
\newcommand{\RBd}[1]{\mathbb{B}_{#1}^d}
\newcommand{\CBd}[1]{\mathbb{D}_{#1}^d}
\newcommand{\RDom}[1]{\mathbb{T}^d \times \RBd{#1}}
\newcommand{\CDom}[2]{\mathbb{T}^d_{#1} \times \CBd{#2}}
\newcommand{\std}{\textup{std}}
\begin{document}

\title[Quasi-analytic props. of the KAM curve]{Quasi-analytic properties of the KAM curve}
\author[Frank Trujillo]{}
\subjclass{37E40, 37J40, 70H08}
\keywords{Twist maps, Hamiltonian systems, nearly integrable, KAM theory, Whitney differentiability}
\email{frank.trujillo@imj-prg.fr}
\maketitle

\centerline{\scshape Frank Trujillo}
\medskip
{\footnotesize
 \centerline{IMJ-PRG, Universit\'e de Paris}
   \centerline{Paris, France}
}

\bigskip

\begin{abstract}
Classical KAM theory guarantees the existence of a positive measure set of invariant tori for sufficiently smooth non-degenerate near-integrable systems. When seen as a function of the frequency this invariant collection of tori is called the KAM curve of the system. Restricted to analytic regularity, we obtain strong quasi-analyticity properties for these objects. In particular, we prove that KAM curves completely characterize the underlying systems. We also show some of the dynamical implications on systems whose KAM curves share certain common features.  
\end{abstract}

\maketitle

\section{Introduction} 

\subsection{Motivation}
\label{Motivation}

Consider the analytic standard family of symplectic transformations given by
\begin{equation*}
\label{standard_family}
\Function{F_{\epsilon,\varphi}}{\torus \times \R}{\torus \times \R}{ (\theta, I)}{(\theta+ I +\epsilon \varphi(\theta), I + \epsilon \varphi(\theta))}
\end{equation*}
where $\epsilon \in \R$ and $\varphi \in C^\omega(\T, \R)$ has zero mean value. For $\varphi$ fixed and for any $\gamma, \tau > 0$ the classical KAM theorem guarantees that for any $\epsilon$ verifying
\[ |\epsilon| < \epsilon_0(\varphi, \gamma, \tau), \]
where $\epsilon_0(\varphi, \gamma, \tau)$ is a positive constant given by the theorem, there exist a collection $\{T_\omega\}_{\omega \in \Omega}$ of invariant curves for the mapping $F_{\epsilon, \varphi}$ whose rotation numbers are in bijection with the set $\DC$ of \textit{Diophantine numbers of type} $(\gamma, \tau)$, namely, with the set of real numbers $\omega$ verifying
\[ |q\omega - p| \geq \frac{\gamma}{|q|^{\tau + 1}}, \hskip0.5cm \text{ for all } p, q \in \Z, \, q \neq 0.\]
Recall that for any $\gamma, \tau > 0$, $\DC$ is a Cantor set of positive Lebesgue measure in $\R$. Since the invariant curves $\{T_\omega\}_{\omega \in \Omega}$ are actually graphs of mappings in $C^\omega(\T, \R)$ this collection can be encoded as a function
\[T_{F_{\epsilon,\varphi}} : \DC \rightarrow C^{\omega}(\T, \R)\]
depending only on the rotation number of the invariant curves. Following \cite{carminati_there_2014}, we call $T_{F_{\epsilon, \varphi}}$ the \textit{KAM curve} associated to $F_{\epsilon, \varphi}$. This construction can be made in a much more general perturbative setting but, for the sake of clarity, we postpone its formal definition to Section \ref{results} and momentarily restrict ourselves to the analytic standard family. 

Notice that the KAM curve $T_{F_{\epsilon, \varphi}}$ is only defined over a Cantor set and thus its differentiability properties are better understood in terms of \textit{Whitney smoothness}, that is, wether or not the function admits a smooth extension to an open neighbourhood of its initial domain. For this particular example, it follows from the results of Z. Shang \cite{shang_note_2000} that $T_{F_{\epsilon, \varphi}}$ is in fact $C^\infty$ smooth in the sense of Whitney. We point out that Shang's result holds in a much more general setting and that analogous results had been previously stablished by V. Lazutkin \cite{lazutkin_existence_1973} in the context of convex billiards and by J. P\"oschel \cite{poschel_integrability_1982} in the Hamiltonian setting (see Section \ref{results} for the definition of KAM curves of Hamiltonians). 
 
In \cite{carminati_there_2014} C. Carminati et al. showed that KAM curves for the analytic standard family $F_{\epsilon, \varphi}$ are not only smooth in the sense of Whitney but they also admit, in a natural way, a unique extension to certain space of holomorphic functions. One of the main interests of these extensions comes from the quasi-analytic properties of such spaces. As a particular application of these properties, one can deduce the following.

\begin{prop}
\label{standard_family_prop}
Let $\varphi, \psi \in C^\omega(\torus, \R)$ and 
$ \epsilon < \min\{ \epsilon_0(\varphi, \gamma, \tau), \epsilon_0(\psi, \gamma, \tau)\}$ where $\epsilon_0$ denotes the constants given by the KAM theorem. If the two KAM curves $T_{F_{\epsilon,\varphi}}$ and $T_{F_{\epsilon,\psi}}$ coincide on a set $\Gamma \subset \DC$ of positive measure then $T_{F_{\epsilon,\varphi}} = T_{F_{\epsilon,\psi}}$. 
\end{prop}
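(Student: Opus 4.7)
The plan is to reduce the statement to a uniqueness theorem for holomorphic functions, leveraging the quasi-analytic extension property of KAM curves described by Carminati et al. First I would form the difference $\Delta := T_{F_{\epsilon,\varphi}} - T_{F_{\epsilon,\psi}}$, viewed as a map from $\DC$ to $C^\omega(\T, \R)$. The hypothesis says that $\Delta$ vanishes on $\Gamma$, and the goal is to conclude that $\Delta$ vanishes on all of $\DC$.

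Next I would invoke the result of Carminati et al.\ to upgrade each KAM curve to a holomorphic extension valued in (a complexification of) the space of analytic functions on $\T$, defined on a common complex domain $U$ whose closure contains $\DC$, with appropriate radial or non-tangential boundary values along the real axis. The difference $\widetilde{\Delta}$ of the two extensions is then a Banach-space valued holomorphic function on $U$ whose boundary values on $\DC$ agree with $\Delta$, and in particular vanish on the positive-measure set $\Gamma$.

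To apply quasi-analyticity cleanly I would test against continuous linear functionals: for every $\ell$ in the topological dual of the target space, the scalar function $\ell \circ \widetilde{\Delta}$ is an ordinary holomorphic function on $U$ whose boundary trace vanishes on $\Gamma$. The quasi-analytic property of the relevant class (a Privalov-type uniqueness statement: a holomorphic function with suitable boundary behaviour that vanishes on a positive-measure subset of the boundary must vanish identically) then forces $\ell \circ \widetilde{\Delta} \equiv 0$ on $U$. Since this holds for every $\ell$, Hahn--Banach yields $\widetilde{\Delta} \equiv 0$, and restricting back to $\DC$ gives $T_{F_{\epsilon,\varphi}} = T_{F_{\epsilon,\psi}}$.

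The main obstacle I anticipate is not the reduction to a scalar problem, which is routine, but rather ensuring that the two holomorphic extensions are defined on the \emph{same} connected domain and that the quasi-analyticity invoked is genuinely the strong form that detects uniqueness from boundary values on a positive-measure set (as opposed to the weaker identity principle, which requires an interior accumulation point). Both ingredients are exactly what the construction of \cite{carminati_there_2014} is designed to provide, so invoking their framework as a black box should make the argument essentially a one-line consequence; the technical heart of the matter lies in that prior work rather than in this proposition itself.
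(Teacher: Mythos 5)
Your argument follows the same route the paper intends: Proposition \ref{standard_family_prop} is stated there without a separate proof, precisely as a consequence of the quasi-analytic extension constructed in \cite{carminati_there_2014}, and your reduction to a scalar boundary-uniqueness statement via continuous linear functionals and Hahn--Banach is the standard way to carry out that deduction. The whole burden then rests, as you correctly note, on the cited framework supplying a common domain of holomorphy and a Privalov-type uniqueness property detecting positive-measure boundary sets, which is exactly what that prior work provides.
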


Quoting the authors in \cite{carminati_there_2014}, \textit{``the knowledge of parametrizations of invariant tori on a set of positive measure of rotation numbers is sufficient to determine all the parametrized KAM curves''}. Let us point out that in general the KAM curves are not analytic since this would imply the complete \textit{integrability} of the system, that is, the space would be completely foliated by invariant tori. Nevertheless, as shown by Proposition \ref{standard_family_prop}, the KAM curves do preserve (in a weak sense) some of the classical properties of analyticity. In \cite{carminati_there_2014} the authors suggest that an analogous of the quasi-analytic extension of the KAM curves and its uniqueness properties should exist for general \textit{near-integrable} systems, i.e. for perturbations of completely integrable systems, in any dimension. 

In this paper we show part of their intuition correct by proving that for general near-integrable systems the associated KAM curves do exhibit strong quasi-analyticity properties. We explore how and to what extent some of the properties of the KAM curve $T_F$ characterize a general near-integrable analytic system $F$. We will tackle this question in both the discrete (exact symplectic transformations) and continuous (Hamiltonian flows) cases. The techniques we employ are different from those in \cite{carminati_there_2014} and do not make use of the aforementioned quasi-analytic extension. Nevertheless, this approach will allow us to conclude stronger uniqueness properties. 

\subsection{The generalized standard family}

Before going any deeper in the discussion we would like to stress the need to deal with general near-integrable systems when considering only uniqueness properties of the KAM curve and not the extensions proposed in \cite{carminati_there_2014}. In fact, a much stronger conclusion than that of Proposition \ref{standard_family_prop} holds for the \textit{generalized standard family} of exact symplectic maps on the $d$-dimensional cylinder $\T^d \times \R^d$ which is defined as 
\begin{equation*}
\label{generalized_standard_family}
 S_\varphi(\theta, I) = (\theta + I + \varphi(\theta), I + \varphi(\theta)),
 \end{equation*}
where $\varphi \in C^1(\T^d, \R^d)$ is of the form $\varphi = \nabla V$ for some $V \in C^2(\td, \R).$
\begin{prop}
\label{generalized_standard_family_general}
Let $\varphi , \psi \in C^1 (\td,\R^d)$ and $\gamma \in C(\td, \R)$. Suppose that the graph of $\gamma$ is invariant under $S_\varphi$ and $S_\psi$. Then $ \varphi = \psi$. 
\end{prop}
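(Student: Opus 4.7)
The plan is to translate the invariance of $\mathrm{graph}(\gamma)$ into a functional equation that depends on $\varphi$ only through a single explicit term, and then to compare this equation for $\varphi$ and $\psi$. Writing $S_\varphi(\theta, \gamma(\theta)) = (F_\varphi(\theta), \gamma(F_\varphi(\theta)))$, where $F_\varphi : \td \to \td$ denotes the dynamics induced on the first coordinate, the explicit form of $S_\varphi$ yields
\[
  F_\varphi(\theta) = \theta + \gamma(\theta) + \varphi(\theta) \pmod{\Z^d}
  \qquad \text{and} \qquad
  \gamma(F_\varphi(\theta)) = \gamma(\theta) + \varphi(\theta).
\]
Substituting the first identity into the second produces the closed-form relation
\[
  H \circ F_\varphi = \id_{\td}, \qquad H(u) := u - \gamma(u) \pmod{\Z^d},
\]
and the essential observation is that the map $H$ does not depend on $\varphi$. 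The same computation applied to $\psi$ gives $H \circ F_\psi = \id_{\td}$ for exactly the same $H$.

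The next step is to upgrade this common one-sided inverse relation to $F_\varphi = F_\psi$. Since $S_\varphi$ is a diffeomorphism of $\td \times \R^d$ that leaves $\mathrm{graph}(\gamma)$ invariant as a set, its restriction to the graph is a bijection; the parametrization $\theta \mapsto (\theta, \gamma(\theta))$ then identifies this restriction with $F_\varphi$, which is therefore a continuous bijection of the compact space $\td$, hence a homeomorphism. Consequently the relation $H \circ F_\varphi = \id_{\td}$ forces $H = F_\varphi^{-1}$, and the analogous argument for $\psi$ gives $H = F_\psi^{-1}$, so that
\[
  F_\varphi = H^{-1} = F_\psi.
\]

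To conclude I would plug the equality $F_\varphi = F_\psi$ back into the original invariance identities: combining $\gamma(F_\varphi(\theta)) = \gamma(\theta) + \varphi(\theta)$ with $\gamma(F_\psi(\theta)) = \gamma(\theta) + \psi(\theta)$ and $F_\varphi = F_\psi$ immediately gives $\varphi(\theta) = \psi(\theta)$ for every $\theta \in \td$. The main conceptual step is the invariance-to-left-inverse translation, which reduces the proposition to an essentially algebraic comparison. The only technical point I anticipate is justifying that $F_\varphi$ is genuinely a homeomorphism of $\td$ and not merely a continuous injection; this follows from the global invertibility of $S_\varphi$ together with the full (two-sided) invariance of $\mathrm{graph}(\gamma)$, though if only forward invariance were available, one could instead combine the injectivity provided by $H \circ F_\varphi = \id_{\td}$ with a degree or invariance-of-domain argument, using that $F_\varphi$ is homotopic to the identity.
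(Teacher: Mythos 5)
Your proposal is correct and follows essentially the same route as the paper: your $F_\varphi$ is the paper's induced torus map $g$, your identity $H\circ F_\varphi=\id_{\td}$ with $H=\id-\gamma$ is exactly the paper's observation that $g^{-1}=\id-\gamma$ depends only on $\gamma$, and both arguments then recover $\varphi=\gamma\circ F_\varphi-\gamma$. Your extra care in justifying that $F_\varphi$ is a homeomorphism (which the paper dismisses as ``clearly'') is a welcome refinement but not a different method.
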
 
\begin{proof} Let $\pi _1 , \pi _2$ denote the projections of $\TR $ onto $\td$ and $\rd$ respectively. Define $\overline{\gamma}: \td \rightarrow \TR$ and $g : \td \rightarrow \td$ as 
\[\overline{\gamma}(\theta)= (\theta, \gamma(\theta)), \hskip1cm g(\theta) = (\pi _1 \circ S_\varphi \circ \overline{\gamma})(\theta).\]
The function $g$ is clearly a torus homeomorphism. By the invariance of  the graph of $\gamma$ it follows that   
\[S_\varphi \circ \overline{\gamma} = \overline{\gamma} \circ g,\]
which implies 
\[ \id_{\td}+ \gamma + \varphi = g, \hskip1cm  \gamma + \varphi = \gamma \circ g.\]
Then $g^{-1} = \id - \gamma$ and thus $\varphi$ is uniquely defined by $\gamma$. Since the same holds if we replace $\varphi$ by $\psi$ in the previous argument it is clear that $\varphi = \psi$.
\end{proof}

\subsection{Description of the results}

After recalling in Section \ref{sc: symplectic_geometry} some of the basic concepts from symplectic geometry that will be used throughout this paper we introduce the notion of $C^\infty$-\textit{uniqueness set}, which is simply the analogous in higher dimensions of what a set with an accumulation point is for analytic functions of one variable (see Section \ref{sc: uniqueness_set} for a precise definition and some of its properties).

The formal definition of the KAM curve for general near-integrable systems is given at the beginning of Sections \ref{sc: mappings} and \ref{sc: flows} for exact symplectic transformations and Hamiltonian flows respectively.  We show in Theorems \ref{main_symplectic_maps} and \ref{main_hamiltonians}, for discrete and continuous analytic systems respectively, that whenever two KAM curves coincide on a $C^\infty$-uniqueness set not only the KAM curves but the underlying systems themselves must be equal. Let us point out that being a $C^\infty$-uniqueness set is a much weaker condition than having positive Lebesgue measure. In fact, Proposition \ref{ExampleUniquenessSet} provides examples of countable  $C^\infty$-uniqueness sets in $\R^2$.

Also in Theorems \ref{main_symplectic_maps} and \ref{main_hamiltonians} and under the weaker assumption that the image of the KAM curves of two near-integrable systems intersect on a $C^\infty$-uniqueness set, i.e. the systems share many of the invariant tori but no assumption on the restricted dynamics is made, we show that the systems must necessarily commute.  

Finally, restricted to Hamiltonian systems, we refine the previous result by studying the implications of different assumptions on the restricted dynamics of the common invariant tori. If the rotation vectors on these tori are collinear, the last assertion in Theorem \ref{main_hamiltonians} shows that one of the flows is actually a time reparametrization of the other. On the other hand, if these rotation vectors are not collinear and the system has two degrees of freedom then both Hamiltonians can be simultaneously conjugated by an analytic symplectic transformation to completely integrable systems. This will be a consequence of Theorem \ref{SimConjug}.

\section{Preliminaries}
\label{sc: preliminaries}

\subsection{Symplectic geometry}
\label{sc: symplectic_geometry}

Let us recall some of the rudiments of symplectic geometry. For proofs and a complete introduction to the subject we refer the reader to \cite{cannas_da_silva_lectures_2001}. 

A smooth manifold $M$ of dimension $2d$ endowed with a closed, non-degenerated $2$-form $\omega$ is called a \textit{symplectic manifold}. We will sometimes explicit dimension of $M$ by writing $M^{2d}$. For any open set $U \subset M$ the pair $(U, \omega_M\mid_U)$ is a symplectic manifold. A submanifold $L \subset M$ is said to be \textit{Lagrangian} if the restriction of the symplectic form to $L$ is equal to zero and $\dim(L) = \frac{1}{2}\dim(M)$. 

A smooth function on $M$ is called a \textit{Hamiltonian}. Every Hamiltonian $H$ defines a unique smooth vector field $X_H$ obeying
\[ i_{X_H}\omega = dH,\]
where $i_{X_H}\omega$ is the $1-$ form on $M$ given by 
\[ i_{X_H}\omega(p)(v_p) = \omega(X_H(p),v_p).\]
We say that $X_H$ is the \textit{Hamiltonian vector field} of $H$ and we denote its flow by $\Phi_H^t.$ The \textit{Poisson bracket} $\{H, L\}$ of two Hamiltonians is defined by 
\[\{ H, L\} = X_H(L). \]
Whenever $\{H, L\} = 0$ we say that the functions $H, L$ are in \textit{involution}. A diffeomorphism $\Psi : N \rightarrow M$ between two symplectic manifolds $(N,\omega_N)$ and $(M, \omega_M)$ is said to be \textit{symplectic} if 
\[ \psi^* (\omega_M) = \omega_N,\]
where $\psi^* (\omega_M)$ denotes the pull-back of $\omega_M$ by $\psi$.  A symplectic manifold $(M,\omega)$ is said to be \textit{exact} if the form $\omega$ is exact, that is, if there exist a $1$-form $\alpha$ such that 
\[\omega = d\alpha. \]
A diffeomorphism $\psi : N \rightarrow M$ between two exact symplectic manifolds $(N, d\alpha_N)$ and $(M,d\alpha_M)$ is said to be \textit{exact symplectic} if 
\[ \psi^*(\alpha_M) = \alpha_N.\]
In particular, every exact symplectic mapping is symplectic.  In the following proposition we recall some of the properties of Hamiltonian vector fields.

\begin{prop}
\label{basics_Hamiltonians}
Let $(M,\omega_M)$, $(N, \omega_N)$ symplectic manifolds, $\Sigma : N \rightarrow M$ a symplectomorphism and $H \in C^\infty(M, \R)$. Denote $\mathcal{H} = H \circ \Sigma$. The following holds:
\begin{enumerate}
\item H is constant along the solutions of $X_H$. 
\item For all $t_0 \in \R$ for which the flow $\Phi^{t_0}_H$ is well defined the mapping $\Psi = \Phi^{t_0}_H$ is symplectic. Furthermore, if $M$ is exact then $\Psi$ is exact symplectic. 
\item For all $t \in \R$ for which the flows $\Phi^t_H, \Phi^t_{\mathcal{H}}$ are well defined
\[ D\Sigma \cdot X_{\mathcal{H}} = X_H \circ \Sigma, \hspace{1cm} \Sigma \circ \Phi^t_\mathcal{H} = \Phi^t_H \circ \Sigma. \] 
\item For all $L \in C^\infty(M)$
\[ \{ H\circ \Sigma, L \circ \Sigma\} = \{ H,L\} \circ \Sigma, \hspace{1cm} [X_H,X_L] = X_{\{ H,L\}},\]
where $[\cdot , \cdot]$ denotes the Lie bracket.  In particular, the flows $\Phi^t_H,$ $\Phi^t_L$ commute if and only if $\{ H,L\} = 0.$ 
\end{enumerate}
\end{prop}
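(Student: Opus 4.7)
The plan is to prove all four items using a small common toolkit: Cartan's magic formula $\mathcal{L}_X = d\circ i_X + i_X \circ d$, the defining relation $i_{X_H}\omega = dH$, non-degeneracy of $\omega$ (to pass from equality of contractions to equality of vector fields), and uniqueness of integral curves (to lift identities about vector fields to identities about their flows).

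Item (1) is one line: $\frac{d}{dt}(H\circ\Phi^t_H) = dH(X_H)\circ\Phi^t_H = \omega(X_H,X_H)\circ\Phi^t_H = 0$ by antisymmetry of $\omega$. For item (2), I would compute
\[ \mathcal{L}_{X_H}\omega = d(i_{X_H}\omega) + i_{X_H}(d\omega) = d(dH) + 0 = 0, \]
so $(\Phi^t_H)^*\omega = \omega$ for all admissible $t$. In the exact case, the same formula applied to $\alpha$ yields $\mathcal{L}_{X_H}\alpha = d(\alpha(X_H)+H)$, and integrating along the flow shows that $(\Phi^{t_0}_H)^*\alpha - \alpha$ is an exact form, which gives the exact-symplectic property in the appropriate convention.

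Item (3) is pure functoriality: the vector field $Y := (D\Sigma)^{-1}\cdot (X_H \circ \Sigma)$ on $N$ satisfies
\[ i_Y \omega_N = i_Y \Sigma^*\omega_M = \Sigma^*(i_{X_H}\omega_M) = \Sigma^*\, dH = d\mathcal{H}, \]
so $Y = X_\mathcal{H}$ by non-degeneracy; the flow-conjugation identity then follows from uniqueness of integral curves, since both $\Sigma\circ\Phi^t_\mathcal{H}$ and $\Phi^t_H\circ\Sigma$ are integral curves of $X_H$ issuing from $\Sigma(p)$. For item (4), the Poisson-bracket identity follows from (3) and the chain rule by expanding $X_\mathcal{H}(L\circ\Sigma)$ using $D\Sigma\cdot X_\mathcal{H} = X_H\circ\Sigma$. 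For the Lie-bracket identity I would use the standard formula $i_{[X,Y]}\omega = \mathcal{L}_X(i_Y\omega) - i_Y(\mathcal{L}_X \omega)$ with $X=X_H$, $Y=X_L$, combine it with $\mathcal{L}_{X_H}\omega = 0$ from (2) and Cartan's formula to get $i_{[X_H,X_L]}\omega = d\{H,L\}$, and then invoke non-degeneracy. The commuting-flows equivalence is the classical ODE fact that two complete vector fields have commuting flows if and only if their Lie bracket vanishes.

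The one step that requires real care is the exact-symplectic part of item (2): Cartan's formula naturally produces the exact remainder $d(\alpha(X_H)+H)$, and to conclude that $\Phi^{t_0}_H$ is exact symplectic in the paper's sense one must check this remainder is compatible with the convention adopted for exact symplecticity. Everything else reduces to routine applications of the tools listed above.
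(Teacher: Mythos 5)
The paper offers no proof of this proposition: it is presented as a recollection of standard facts, with the reader referred to \cite{cannas_da_silva_lectures_2001}, so there is nothing to compare line by line. Your argument is the standard textbook one and is correct: antisymmetry of $\omega$ gives (1), Cartan's formula gives $\mathcal{L}_{X_H}\omega = 0$ and hence (2), naturality of the interior product under the diffeomorphism $\Sigma$ together with uniqueness of integral curves gives (3), and the identity $i_{[X,Y]}\omega = \mathcal{L}_X(i_Y\omega) - i_Y(\mathcal{L}_X\omega)$ combined with $\mathcal{L}_{X_H}\omega=0$ gives $i_{[X_H,X_L]}\omega = d\{H,L\}$ and hence (4). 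Two caveats, neither of which is a gap in your reasoning. First, the point you flag about exactness is real: the Cartan computation yields $(\Phi^{t_0}_H)^*\alpha - \alpha = d\bigl(\int_0^{t_0}(\alpha(X_H)+H)\circ\Phi^t_H\,dt\bigr)$, i.e.\ equality of primitives only up to an exact $1$-form, whereas the paper's literal definition of exact symplectic demands $\psi^*\alpha_M = \alpha_N$ on the nose; a Hamiltonian flow does not satisfy that in general, so the statement must be read with the usual convention that $\psi^*\alpha - \alpha$ is exact. Second, the final equivalence in (4) is a mild abuse that your proof inherits from the statement: commuting flows only force $[X_H,X_L] = X_{\{H,L\}} = 0$, hence that $\{H,L\}$ is locally constant, not that it vanishes; the direction actually used later in the paper ($\{H,L\}=0$ implies commuting flows) is the unproblematic one.
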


A system on a symplectic manifold $M^{2d}$ (Hamiltonian flow or symplectomorphism) is said to be \textit{integrable} if there exist functions $f_1, f_2, \dots ,f_d \in C^\infty (M, \R)$ such that:
\begin{enumerate}
\item $f_1, \dots, f_d$ are invariant by the system,
\item $f_1, f_2, \dots ,f_d$ are generically independent, i.e., $df_1, \dots, df_n$ are linearly independent almost everywhere,
\item $f_i, f_j$ are in involution (i.e. $\{f_i, f_j\} = 0$) for every $i, j=1,\dots,d.$
\end{enumerate}

Functions invariant by the system are called \textit{integrals} of the system. For integrable systems and under fairly general conditions the Arnold-Liouville-Mineur theorem assures that we can locally describe the dynamics of the system in a simplified set of coordinates known as \textit{angle-action coordinates}. 
\begin{thm}[Arnold-Liouville-Mineur]
\label{arnold_liouville}
Let $(M^{2d},\omega)$ be a symplectic manifold and let $f_1,f_2,\dots,f_d \in C^\infty(M, \R)$ be $d$ generically independent functions. Consider $F = (f_1,\dots,f_d)$ and suppose $0 \in \R^d$ is a regular value of $F$ and $M_0$ is a compact connected component of $F^{-1}(0)$. Then there exists an open neighbourhood $U$ of $M_0$ and a symplectomorphism 
\[\psi : U \rightarrow \mathbb{T}^d \times B,\]
where $B$ is an open ball centred at the origin and $\mathbb{T}^d \times B$ is endowed with the canonical symplectic form $d\theta \wedge dI$, such that $F \circ \psi ^{-1}$ depends only on $I$.
\end{thm}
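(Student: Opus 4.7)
The plan is to follow the classical proof of the Arnold-Liouville-Mineur theorem (assuming implicitly, as the surrounding discussion of integrable systems makes clear, that $f_1,\dots,f_d$ are pairwise in involution). The argument splits naturally into three stages: first identify the topology of $M_0$, then realize a smooth fibration by invariant tori near $M_0$, and finally upgrade this to symplectic angle-action coordinates.

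First I would show $M_0 \cong \T^d$. Since $0$ is a regular value, $M_0$ is a smooth compact connected $d$-dimensional submanifold. From involution and point (4) of Proposition \ref{basics_Hamiltonians}, the vector fields $X_{f_1},\dots,X_{f_d}$ commute pairwise, and since $X_{f_i}(f_j)=\{f_j,f_i\}=0$ each is tangent to $M_0$. Generic independence of the $df_i$ together with the non-degeneracy of $\omega$ forces the $X_{f_i}$ to span $TM_0$ at every point. Compactness of $M_0$ then makes the joint flow $\Phi^t := \Phi^{t_1}_{f_1}\circ\cdots\circ\Phi^{t_d}_{f_d}$ well defined for all $t\in\R^d$, giving a locally free $\R^d$-action on $M_0$. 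Choosing $p_0\in M_0$, the orbit map $t\mapsto \Phi^t(p_0)$ is a local diffeomorphism, hence surjective by connectedness and compactness, and its stabilizer $\Lambda\subset\R^d$ is a discrete cocompact subgroup, i.e.\ a lattice of rank $d$; thus $M_0 \cong \R^d/\Lambda \cong \T^d$.

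Next I would trivialize the nearby fibration: by the regular value theorem and the argument just given applied to neighbouring fibres, there is an open $V\supset M_0$ on which each level set $F^{-1}(c)$ is a smooth compact $d$-torus, and these assemble into a trivial fibration $V\cong \T^d\times B_0$. This diffeomorphism is not yet symplectic. To fix action coordinates, note that $V$ retracts onto $\T^d$ so $\omega|_V$ is exact, $\omega = d\alpha$. Pick a basis of $1$-cycles $\gamma_1(c),\dots,\gamma_d(c)$ on each fibre, varying continuously in $c$, and set
\[ I_j(c) = \frac{1}{2\pi}\oint_{\gamma_j(c)}\alpha. \]
The Lagrangian character of the fibres (immediate from involution, since $\omega(X_{f_i},X_{f_j})=\{f_i,f_j\}=0$) together with Stokes' theorem shows that $I_j(c)$ depends only on the homology class of $\gamma_j(c)$, and a period computation shows that $c\mapsto I(c)$ is a local diffeomorphism near $0$, producing a new ball $B$ on which $I$ is a good coordinate.

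Finally I would build the angles. The Hamiltonian flows of the $I_j$ pairwise commute, preserve the fibres, and by virtue of the normalization $1/2\pi$ have joint period lattice exactly $2\pi\Z^d$ on every nearby torus. Fixing a smooth local section $\sigma:B\to V$ transverse to the $\gamma_j$ cycles, define
\[ \psi^{-1}(\theta,I) = \Phi^{\theta_1}_{I_1}\circ\cdots\circ\Phi^{\theta_d}_{I_d}(\sigma(I)). \]
One checks that $\psi$ is a diffeomorphism from a neighbourhood $U$ of $M_0$ onto $\T^d\times B$ and that $\psi^*(d\theta\wedge dI) = \omega$, using $\{I_i,I_j\}=0$, $i_{X_{I_j}}\omega = dI_j$ and the period normalization. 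Since the $I_j$ are functions of $F$ alone, $F\circ\psi^{-1}$ depends only on $I$, as required. The main obstacle in this program is the last step: verifying simultaneously that the joint flow of the $X_{I_j}$ really closes up with period $2\pi$ on each torus (so that $\theta$ is a well-defined $\T^d$-coordinate) and that $\psi$ is symplectic. Both rely crucially on the Lagrangian property of the invariant tori, which is the non-formal content of the theorem.
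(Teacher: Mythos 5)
The paper does not prove this theorem: it states it as background and refers the reader to the cited reference (Arnold's \emph{Mathematical Methods of Classical Mechanics}) for the proof, so there is no in-paper argument to compare against. Your outline is the standard classical proof that that reference (and e.g.\ Duistermaat's paper on global action-angle variables) gives: identify $M_0\cong\T^d$ via the commuting complete flows of the $X_{f_i}$, trivialize the nearby Lagrangian torus fibration, define actions by periods of a primitive of $\omega$, and build angles by flowing a section. You are also right that the involution hypothesis is missing from the statement as printed and must be supplied from the surrounding definition of integrability. As a sketch it is essentially correct, and you correctly identify the genuinely non-formal steps (periodicity of the $X_{I_j}$-flows and symplecticity of $\psi$).

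Two points deserve tightening. First, ``$V$ retracts onto $\T^d$ so $\omega|_V$ is exact'' is not a valid inference on its own, since $H^2(\T^d;\R)\neq 0$ for $d\geq 2$; exactness follows because the retraction identifies $[\omega|_V]$ with $[\omega|_{M_0}]$, which vanishes precisely because $M_0$ is Lagrangian. So the Lagrangian property enters already here, not only in the well-definedness of the $I_j$. Second, ``one checks that $\psi^*(d\theta\wedge dI)=\omega$'' conceals the usual subtlety: for an arbitrary section $\sigma$ one only gets $(\psi^{-1})^*\omega=\sum_j dI_j\wedge d\theta_j+\sum_{j<k}A_{jk}(I)\,dI_j\wedge dI_k$, and one must either choose $\sigma$ Lagrangian or absorb the correction by an $I$-dependent shift of the angles (this is exactly the kind of repair the paper later performs by hand in Proposition \ref{LagFol}). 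Also note the paper's convention $\T^d=(\mathbb{D}^1_s/\Z)^d$, so the natural normalization is $I_j=\oint_{\gamma_j}\alpha$ with period lattice $\Z^d$ rather than $2\pi\Z^d$. None of this changes the verdict: your route is the classical one, and filling in the two steps above yields a complete proof.
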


The new coordinates $\theta_i$ and $I_i$ are called \textit{angle} and \textit{action} coordinates respectively. For a proof of this theorem we refer the reader to \cite{arnold_mathematical_2007}. 

By the previous theorem, for any integrable Hamiltonian $H \in C^\infty(M, \R)$ there exists (locally) a symplectic change of coordinates $\psi : U \subset M \rightarrow \mathbb{T}^d \times B$ such that the Hamiltonian flow associated to $h = H \circ \Psi$ is given by 
 \begin{equation}
 \label{integrable_Hamiltonian_form}
 (t, \theta, I) \mapsto (\theta + t \nabla h (I), I).
 \end{equation}
Similarly, for any smooth symplectomorphism $\Sigma: M \rightarrow M$ there exists (locally) a symplectic change of coordinates $\psi : U \subset M \rightarrow \mathbb{T}^d \times B$ such that $\psi^{-1}(\T^d \times \{I_0\})$ is invariant for all $I_0 \in B$. Thus 
 \[ \psi \circ \Sigma \circ \psi^{-1}(\theta, I) = (g(\theta, I), I),\]
for some smooth function $g$. Since the RHS transformation in the previous equation is symplectic, $g$ must be of the form 
\[g(\theta, I) = \theta + \sigma(I)\]
 for some smooth function $\sigma$. Therefore 
 \begin{equation}
 \label{integrable_map_form}
 \psi \circ \Sigma \circ \psi^{-1}(\theta, I) = (\theta + \sigma(I), I).
 \end{equation}
 Since the symplectic change of coordinates $\psi$ establishes a conjugacy with the initial system, the Arnold-Liouville-Mineur theorem asserts that every integrable system (Hamiltonian flow or symplectomorphism) is locally equivalent to a system of the form (\ref{integrable_Hamiltonian_form}) or (\ref{integrable_map_form}) defined over $\T^d \times B \subset \T^d \times \R^d$ and endowed with its canonical symplectic form. 
 
 Since in this work we will only be interested in perturbations of integrable systems and in the persistence of local phenomena, in the following we only consider only systems defined over $\T^d \times B \subset \T^d \times \R^d$, endowed with its canonical symplectic form, and we will refer to \textit{integrability} of the system as wether or not the system can be symplectically conjugated to the form (\ref{integrable_Hamiltonian_form}) or (\ref{integrable_map_form}). 
 
 \subsection{Uniqueness sets}
\label{sc: uniqueness_set}

Let $M$ be a manifold and $p \in M$. We say that a set $K \subset M$ is a $C^\infty$-\textit{uniqueness set at $p$} if for all $C^\infty$ functions defined on an open connected neighbourhood of $K$ such that \[ f\mid_K = 0\] $f$ and its derivatives of all orders are equal to zero at $p$, that is, $f$ is \textit{flat at $p$}. Notice that in dimension one a $C^\infty$-uniqueness set at $p$ is simply a set that accumulates at $p$. 

The next proposition provides some of the properties of $C^\infty$-uniqueness sets that will be used along the paper. 
\begin{prop}
\label{UniSet}
Let $M_1, M_2$ be smooth manifolds. Let $p_i \in K_i \subset M_i$, $i = 1,2$. The following holds:
\begin{enumerate}
\item If $Leb(K_1) > 0$ then $K_1$ is a $C^\infty$-uniqueness set for almost all $p \in K_1$.
\item If $K_1,K_2$ are $C^\infty$-uniqueness set at $p_1$ and $p_2$ respectively then $K_1 \times K_2 \subset M_1 \times M_2$ is a $C^\infty$-uniqueness set at $(p_1,p_2).$
\item  Let $U$ be an open neighbourhood of $K_1$. If $h: U\subset M_1 \rightarrow M_2$ is a smooth diffeomorphism to its image then $h(K_1)$ is a $C^\infty$-uniqueness set at $h(p_1)$.
\end{enumerate}
\end{prop}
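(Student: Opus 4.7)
The three claims are independent, and I would treat them in the order (3), (2), (1), which is roughly the order of increasing difficulty.

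For (3), the plan is simply to pull back via the diffeomorphism. If $g$ is smooth on an open neighbourhood of $h(K_1)$ and $g\mid_{h(K_1)} = 0$, then $f := g \circ h$ is smooth on an open neighbourhood of $K_1$ and vanishes on $K_1$. By hypothesis, $f$ is flat at $p_1$. Expressing $g = f \circ h^{-1}$ and applying Faà di Bruno's formula (or iterated chain rule), every partial derivative of $g$ at $h(p_1)$ in local charts is a polynomial expression in the partial derivatives of $f$ at $p_1$ and those of $h^{-1}$ at $h(p_1)$; since the former all vanish, so do all derivatives of $g$ at $h(p_1)$.

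For (2), the plan is to reduce to the one-factor case, applied twice. Let $f$ be $C^\infty$ on an open neighbourhood $U$ of $K_1 \times K_2$, vanishing on $K_1 \times K_2$. Fix local coordinates around $p_1$ in $M_1$ and around $p_2$ in $M_2$. For each fixed $x \in K_1$, the function $y \mapsto f(x,y)$ is smooth near $K_2$ and vanishes on $K_2$, hence is flat at $p_2$. Thus for every multi-index $\beta$ in the $y$-variables, the function $x \mapsto \partial^\beta_y f(x, p_2)$ is smooth on an open neighbourhood of $K_1$ and identically zero on $K_1$. By the uniqueness of $K_1$ at $p_1$, all of its $x$-derivatives at $p_1$ vanish, which is exactly the statement that every mixed partial derivative of $f$ at $(p_1,p_2)$ vanishes.

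For (1), the main tool is the Lebesgue density theorem. In a local chart, let $p$ be a Lebesgue density point of $K_1$; almost every point of $K_1$ is of this kind. I would argue by induction on the order $k$ of the derivative that every $C^\infty$ function vanishing on $K_1$ has all partial derivatives up to order $k$ equal to zero at $p$. The base case $k=0$ is immediate from continuity and the fact that $p$ lies in the closure of $K_1$. For the inductive step, Taylor's formula around $p$ to order $k$ yields $f(x) = P_k(x-p) + o(|x-p|^k)$, where $P_k$ is the degree-$k$ Taylor polynomial whose lower-order terms vanish by induction, so $P_k$ is homogeneous of degree $k$; choosing $x \in K_1$ arbitrarily close to $p$ along a cone around any prescribed direction (which meets $K_1$ in a set of positive density at $p$) forces $P_k$ to vanish in that direction, hence $P_k \equiv 0$.

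The hard part is (1): the delicate point is that a single Lebesgue density point does not a priori feed enough directional information into the Taylor expansion, so one really needs the strong form of the density theorem (density equal to one in every cone at $p$), or equivalently, the fact that at a density point the set $(K_1 - p)/r$ fills the unit ball in the sense of Hausdorff as $r \to 0$. Once that is in place, the induction proceeds smoothly and yields flatness of $f$ at $p$.
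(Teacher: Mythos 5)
Your proof is correct and follows essentially the same route as the paper: parts (2) and (3) are handled identically (slicing the product and reducing to each factor in turn, and transporting flatness through the diffeomorphism via the chain rule), and part (1) rests on the Lebesgue density theorem in both versions. The only organizational difference is in (1), where the paper iterates the first-order statement ``the gradient vanishes at every density point'' over the full-measure set of density points, whereas you run a Taylor-expansion induction at a single fixed density point using density in cones; both are valid, and your remark that the cone-density refinement is the delicate step correctly identifies the point the paper leaves implicit.
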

\begin{proof} We suppose WLOG that $M_1 = \R^m$, $M_2 = \R^n$. We denote by $\mu$ the Lebesgue measure on $M_1$. 

$\mathit{1.}$ For a full measure set $K \subset K_1$ 
\[ \lim_{r \rightarrow 0}\dfrac{\mu(B_r(x) \cap K_1)}{\mu(B_r(x))} = 1, \]
for all $x \in K$. This easily implies that $\nabla f (x) = 0$ for all $x \in K$. A simple inductive argument shows the assertion. 

$\mathit{2.}$ Let $f \in C^\infty(\R^m \times \R^n, \R)$ and $(\alpha,\beta) \in \N^m \times \N^n$. Then 
\[ \partial^\alpha f (p_1,y) = 0. \]
for all $y \in K_2$. Hence 
\[ \partial^\beta \partial^\alpha f (p_1,p_2) = 0, \]
which proves the assertion. 

$\mathit{3.}$ Notice that for all $f \in C^\infty(\R^n)$, $f$ is flat at $h(p_1)$ if and only if $f \circ h^{-1}$ is flat at $p_1$.
\end{proof}

Positive measure is a sufficient condition for a set to be of uniqueness but it is far from being necessary. In the following given a set $A \subset \R^n$ we denote by $A'$ the set of accumulation points of $K.$

\begin{prop}
\label{ExampleUniquenessSet}
Let $K \subset \R^2$ such that $K' = \{0\}$ and denote
\[ A:= \left\{ \frac{p}{| p |} \,\middle|\, p \in K \,\setminus\, \{ 0 \}\right\}.\]
If $A'$ is infinite then $K$ is a $C^\infty$-uniqueness set at $0$. 
\end{prop}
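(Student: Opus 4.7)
The plan is to argue by contradiction using a Taylor expansion at the origin. Suppose $f \in C^\infty(U, \R)$ is defined on an open connected neighbourhood $U$ of $K$ (which we may assume contains $0$, since $0$ is accumulated by $K$), satisfies $f|_K = 0$, and is not flat at $0$. Let $n \geq 1$ be the smallest integer such that some $n$-th order partial derivative of $f$ at $0$ is nonzero, and let $P_n$ denote the corresponding degree-$n$ homogeneous component of the Taylor series of $f$ at $0$. By the choice of $n$, $P_n$ is a nonzero homogeneous polynomial on $\R^2$, and Taylor's formula gives $f(x) = P_n(x) + R(x)$ with $R(x) = o(|x|^n)$ as $x \to 0$.

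The geometric heart of the argument is the following extraction: for every $\omega_0 \in A'$, exhibit a sequence $(p_k) \subset K \setminus \{0\}$ such that $p_k \to 0$ and $p_k/|p_k| \to \omega_0$. Indeed, $\omega_0 \in A'$ means every neighbourhood of $\omega_0$ in $S^1$ meets $A$ in infinitely many points, so the corresponding points of $K$ form an infinite set; since $K' = \{0\}$, Bolzano--Weierstrass forces this infinite set to cluster at $0$, and a diagonal argument produces the required sequence. Evaluating $f(p_k) = 0$ and dividing by $|p_k|^n$ yields $P_n(p_k/|p_k|) = -R(p_k)/|p_k|^n \to 0$, whence $P_n(\omega_0) = 0$ by continuity.

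Therefore $P_n$ vanishes on the infinite set $A' \subset S^1$. However, a nonzero homogeneous polynomial of degree $n$ on $\R^2$ factors over $\C$ as a product of $n$ linear forms, each of which vanishes on at most two antipodal points of $S^1$; hence $P_n|_{S^1}$ has at most $2n$ zeros. This contradicts the infinitude of $A'$, forcing $P_n \equiv 0$ and contradicting the choice of $n$; consequently $f$ must be flat at $0$.

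The step I expect to be most delicate is the extraction of the radially-converging sequence: the hypothesis $A'$ infinite only asserts that many directions accumulate in $A$, and one must ensure that each limiting direction is actually realised by points of $K$ converging to $0$, rather than by points escaping to infinity. This is exactly where the hypothesis $K' = \{0\}$ is used, together with the observation that only the behaviour of $K$ in arbitrarily small neighbourhoods of $0$ is relevant for flatness.
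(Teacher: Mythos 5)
Your argument is correct and shares its skeleton with the paper's proof: both reduce to the first nonvanishing homogeneous Taylor polynomial $P_n$ of $f$ at the origin and test it against the directions accumulated by $K$. The difference is in how the contradiction is extracted. You show that $P_n$ vanishes at every point of $A'$ and then appeal to the factorization of a nonzero binary form of degree $n$ into linear factors, so that $P_n$ has at most $2n$ zeros on $S^1$, which is incompatible with $A'$ being infinite. The paper uses less of the hypothesis at this stage: it only exploits that $A'$, being an infinite subset of the compact circle, accumulates somewhere; after rotating that accumulation direction onto a coordinate axis it produces slopes $\lambda_m \searrow 0$ realized by sequences in $K$ tending to $0$, and runs an explicit estimate on $N!\,f/y^N$ whose limit isolates the lowest nonzero coefficient $a=\partial_x^{k_0}\partial_y^{N-k_0}f(0)$ and forces $|a|\lambda_m^{k_0}$ to be dominated by strictly higher powers of $\lambda_m$, impossible for $\lambda_m$ small. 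Your root-counting finish is cleaner, dispenses with the rotation and the coefficient bookkeeping, and uses the full strength of ``$A'$ infinite'' rather than just one accumulation direction. One caveat, which you rightly flag as the delicate point but do not fully close: Bolzano--Weierstrass yields a cluster point of the points of $K$ realizing directions near $\omega_0$ only if those points form a bounded set, and $K'=\{0\}$ alone does not exclude the possibility that the accumulation in $A$ is produced entirely by an unbounded, discrete part of $K$ (in which case the conclusion can genuinely fail: take $K$ to be a sequence tending to $0$ along a single ray together with an unbounded discrete set realizing infinitely many limiting directions, and multiply a non-flat function by a cutoff supported near $0$). The paper's proof makes the very same tacit assumption when it posits, for each $m$, sequences in $K$ converging to $0$ with prescribed limiting slope, so the statement should be read with $K$ bounded; under that reading your proof is complete.
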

\begin{proof} Let $f \in C^\infty(\R^2, \R)$ such that $f \mid_K = 0$ and suppose that $f$ is not flat at $0$. Then there exist $N >1$, $N \geq k_0 \geq 0$ and $C > 0$ such that 
\[ f(x,y) = \dfrac{1}{N!}\sum_{k = k_0}^N \partial_x^{k}\partial_y^{N-kf}(0) x^{k}y^{N-k} + F_N(x,y)\]
with \[a := \partial_x^{k_0}\partial_y^{N-k_0}f(0) \neq 0\] and 
\[ |F_N(x,y)| \leq C| (x,y) |^{N+1} \]
for all $(x,y) \in B_1(0)$. Up to composition with a rotation we can suppose WLOG that for all $m \in \N$ there exist sequences $\big(x_n^{(m)}\big)_{n \in \N}, \big(y_n^{(m)}\big)_{n \in \N} $ in $K$ such that
\[ |x_n^{(m)}|, |y_n^{(m)}| \xrightarrow{n \rightarrow \infty} 0, \hspace{1cm} \dfrac{|x_n^{(m)}|}{|y_n^{(m)}|} \xrightarrow{n \rightarrow \infty} \lambda_m, \hspace{1cm} \lambda_m \searrow 0.\]
Then 
\begin{align*}
0 & = \left|\dfrac{N! f\big(x_n^{(m)},y_n^{(m)}\big)}{\big(y_n^{(m)}\big)^N} \right|\\
 & \geq \left| a\left( \dfrac{x_n^{(m)}}{y_n^{(m)}} \right)^{k_0}\right| - \left| \left( \dfrac{x_n^{(m)}}{y_n^{(m)}} \right)^{k_0+1}\sum_{k > k_0}^N \left( \dfrac{x_n^{(m)}}{y_n^{(m)}} \right)^{N - k_0 - 1}\partial_x^{N-k}\partial_y^kf(0)\right| \\
 & - CN! \left| \big(x_n^{(m)},y_n^{(m)}\big) \right| \left|\left(\dfrac{x_n^{(m)}}{y_n^{(m)}},1\right)\right|^{N} \\
 & \xrightarrow{n \to \infty} |a|\lambda_m^{k_0} - \lambda_m^{k_0+1} \sum_{k > k_0}^N \lambda_m^{N- k_0 -1}\left|\partial_x^{N-k}\partial_y^kf(0)\right|.
\end{align*}
As the last expression is positive for $m$ sufficiently large this is a contradiction. \end{proof}

Similar conditions show the existence of countable $C^\infty$-uniqueness sets in any dimension. We finish this section with a simple remark on the composition of analytic maps.
\begin{lem} 
\label{FlatAnalytic}
Let $F,$ $G,$ $\Phi,$ $\Psi$ be $C^\infty$ diffeomorphisms defined on open, connected neighbourhoods of $\td \times \{0\} \subset \TR$. Suppose that $F,$ $G$ are analytic and that the compositions
 \[\overline{F} := \Psi \circ F \circ \Phi, \hspace{1cm} \overline{G} := \Psi \circ G \circ \Phi,\]
 are well defined. If $\overline{F} = \overline{G} + \oir $ then $F = G$.
\end{lem}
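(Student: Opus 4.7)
The plan is to reduce the claim to the identity theorem for real analytic functions on a connected open set: if an analytic function has vanishing Taylor series at a single interior point of its (connected) domain, then it is identically zero. It therefore suffices to exhibit one point of the common domain of analyticity of $F$ and $G$ at which every partial derivative of every order of $F-G$ vanishes.

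The first step is to peel off $\Psi$. The hypothesis $\overline{F} - \overline{G} \in \oir$ says that $\Psi\circ(F\circ\Phi) - \Psi\circ(G\circ\Phi)$ is flat on $\td\times\{0\}$. Since $\Psi$ is a smooth diffeomorphism, one may write
\[\Psi\circ u - \Psi\circ v \;=\; \left(\int_0^1 D\Psi\!\big(v + t(u-v)\big)\,dt\right)(u-v),\]
with $u=F\circ\Phi$ and $v=G\circ\Phi$; the matrix-valued integrand is smooth on a neighbourhood of $\td\times\{0\}$ and pointwise invertible there, so $F\circ\Phi - G\circ\Phi$ is itself flat on $\td\times\{0\}$. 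The second step is to peel off $\Phi$: flatness on a set is invariant under smooth pre-composition with a diffeomorphism (an inductive chain-rule, or Fa\`a di Bruno, argument), so $F-G$ is flat at every point of the smooth $d$-submanifold $N:=\Phi(\td\times\{0\})$. Choosing any $p_0 := \Phi(\theta_0,0)$ with $\theta_0\in\td$, all partial derivatives of $F-G$ of every order vanish at $p_0$.

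Finally, on a connected open neighbourhood $U$ of $\td\times\{0\}$ contained in the common domain of analyticity of $F$ and $G$, the difference $F-G$ is real analytic and, by the previous step, has vanishing Taylor series at $p_0\in U$. The identity theorem forces $F-G\equiv 0$ on $U$, hence $F=G$.

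The only nontrivial point is the verification that ``flat on a set'' is preserved under smooth pre- and post-composition with diffeomorphisms; this is standard via the chain rule, but it is the single place where the $C^\infty$ regularity (as opposed to mere continuity) of $\Phi$ and $\Psi$ is genuinely used. Everything else is a direct appeal to the analyticity of $F$ and $G$ via the identity principle.
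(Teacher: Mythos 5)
Your proof is correct and takes essentially the same route as the paper's: both use the integral form of the mean value theorem (Hadamard's lemma) to strip off the outer diffeomorphism, the invariance of flatness under composition with smooth diffeomorphisms to strip off the inner one, and then invoke the identity principle for real-analytic maps at a point of $\Phi(\td\times\{0\})$. The only difference is cosmetic: the paper writes $F=\Psi^{-1}\circ\big(\Psi\circ G+H\circ\Phi^{-1}\big)$ and expands $\Psi^{-1}$ around $\Psi\circ G$, which sidesteps your need to check that $\int_0^1 D\Psi\big(v+t(u-v)\big)\,dt$ is invertible (which does hold near $\td\times\{0\}$, since $u=v$ there forces the integral to equal $D\Psi(v)$).
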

\begin{proof} Let $\overline{F} = \overline{G} + H.$ Up to consider appropriate liftings of $F,$ $G,$ $\Phi,$ $\Psi$, which as an abuse of notation we will denote by the same letters,  we can suppose WLOG that $\overline{F}, \overline{G}, H : \R^d \times U \rightarrow \R^d \times \R^d$ for some open neighbourhood $U \subset \R^d$ and that $H = \oir.$  Then for any $(\theta, I) \in \R^d \times U$ 
\begin{align*}
 F(\theta, I) & = \Psi^{-1}  \circ (\Psi \circ G + H \circ \Phi^{-1})(\theta, I) \\
 & = G(\theta, I) + \int_0^1 D\Psi^{-1}(\Psi \circ G (\theta, I)+ t H \circ \Phi^{-1}(\theta, I)) H \circ \Phi^{-1}(\theta, I) dt. 
\end{align*}
Since $H \circ \Phi^{-1}$ is flat at $\Phi(\R^d \times \{ 0 \})$ and $F,$ $G$ are analytic it follows that $F = G$.
\end{proof}
 
\section{The KAM curve}
\label{results}

Let us start this section by introducing some of the notations that will appear in the rest of the paper. 

\subsection{Notations}

Given $z \in \C$ we denote its modulus by $|z|$. For $z \in \C^d$ we denote 
\[| z| = \sqrt{|z_1|^2 + \dots + |z_d|^2}.\] 
Given $s > 0$ and $d \in \N$ denote
\begin{gather*}
\RBd{s} = \left\{ z \in \R^d \,\big|\, |z| < s \right\}, \hskip0.5cm \CBd{s} =  \left\{ z \in \C^d \,\big|\, |z| < s \right\}, \hskip0.5cm \td_s = \left( \mathbb{D}^1_{s} / \Z\right)^d .
\end{gather*}
For any $f: U \subset \C^d \rightarrow \C^n$ we denote its sup-norm by 
\[ \| f\|_U = \sup_{z \in U} | f(z) |.\]
Let $K \subset \R^d$ closed. Given $f: K \subset \R^d \rightarrow \R^n$ we say that $f$ is \textit{$C^\infty$ smooth in the sense of Whitney} if there exist an open neighbourhood $U$ of $K$ and a function $\widetilde{f} \in C^\infty(U, \R^n)$ such that $\widetilde{f}\mid_K = f.$ We denote the set of $C^\infty$ Whitney-smooth functions on $K$ and taking values on $\R^n$ by $C^\infty(K, \R^n)$. In other words
\[ C^\infty(K, \R^n) = \bigcup_{K \subset U} C^\infty(U, \R^n),\]
where the union is taken over all the open neighbourhoods $U \subset \R^d$ of $K$. Similarly, we denote by $\textup{Symp}^\infty(\T^d \times K, \T^d \times \R^d)$ the set of $C^\infty$ Whitney-smooth functions on $\td \times K$ taking values in $\td \times \R^d$ that admit a smooth extension to a symplectic embedding of some open neighbourhood $\td \times U \subset \td \times \R^d$  of $\T^d \times K$ into $\T^d \times \R^d$. Given $z \in \C^d$ we denote
\[\|z\| = \min_{k \in \Z^d} |z - k|.\]
Given $\gamma, \tau > 0$ we say that $\omega \in \R^d$ is \textit{Diophantine of type} $(\gamma, \tau)$ if it satisfies
\[ \| \langle \omega, k \rangle \| \geq \dfrac{\gamma}{|k|^{d + \tau}} \hspace{1cm} \text{ for all } k \in \Z^d \,\setminus\, \{ 0 \}.\] 
We denote the set of Diophantine numbers of type $(\gamma, \tau)$ by $\DCd$. Recall that for any $\gamma, \tau > 0$ the set $\DCd$ has positive Lebesgue measure and for any bounded open set $\Omega \subset \R^d$
\[ \textup{Leb}(\DCd \cap \Omega) \xrightarrow{\gamma \to 0} \textup{Leb}(\Omega). \]
For any $\gamma, \tau > 0$ and any open bounded set $\Omega \subset \rd$ we define 
\[ \Omega^\tau_{\gamma} := \{ \omega \in \Omega \mid \omega \in \DCd, \, d(\omega,\partial \Omega) > \gamma \}.\]
Given $f : \T^d \rightarrow \C$ we will denote its average over $\T^d$ by $[f]$.

\subsection{Exact symplectic maps}
\label{sc: mappings}

As mentioned in the introduction, the KAM curve associated to a sufficiently small perturbation of a non-degenerate integrable system consists of the collection of invariant tori given by the KAM theorem when encoded as a function of the Diophantine frequencies. To formalize the definition we state a simplified version of the KAM theorem for exact symplectic transformations found in \cite{shang_note_2000}. 

To simplify the exposition, let us start by introducing a suitable space of transformations. As we will be interested in analytic symplectic transformations defined on domains of the form $\RDom{s}$ it is useful to consider the space of symplectic transformations defined on a fixed complex neighbourhood of $\RDom{s}$. Given $r, s > 0$, $d \in \N$ we define the space of \textit{exact symplectic embeddings} of $\CDom{r}{s}$ as
\[ \SE{d}{r}{s} = \left\{ F: \CDom{r}{s} \rightarrow \T^d_\infty \times \C^d \ \left| \begin{array}{l} F \textup{ is real analytic and } F\mid_{\RDom{s}} \\ \textup{is an exact symplectic embedding.}\end{array}\right.\right\}\]
We endow $\SE{d}{r}{s}$ with the $C^0$-topology. 

\begin{thm}[KAM theorem for symplectomorphisms]
\label{KAM_symplectic} 
Let $r, s > 0$ and $d \in \N$. Suppose $S_0 : \CBd{s} \rightarrow \C$ real analytic such that $\partial_I S_0\mid_{\CBd{s}}$ is a diffeomorphism onto its image and 
\[ \|\partial^2_IS_0\|_{\CBd{s}}, \|\partial^2_IS_0^{-1}\|_{\CBd{s}} < +\infty.\]
Denote by $F_0$ the associated exact symplectic map in $\SE{d}{r}{s}$ given by
\[F_0(\theta, I) = (\theta + \partial_I S_0(I), I)\]
and let 
\[\Omega = \partial_I S_0(\RBd{s}).\]
Given $\gamma, \tau > 0$ there exists an open neighbourhood $\mathcal{U}_{\gamma, \tau}$ of $F_0$ in $\SE{d}{r}{s}$ such that for any $F \in \mathcal{U}_{\gamma, \tau}$ there exist a Cantor set $K \subset \RBd{s}$ and Whitney smooth functions 
\[ S \in C^\infty(K, \R), \hone \Sigma \in \textup{Symp}^\infty(\T^d \times K, \T^d \times \R^d),\]
such that $\partial_I S : K \rightarrow \Omega^\tau_\gamma$ is a bijection and
 \[ \Sigma ^{-1} \circ F \circ \Sigma \mid_{\td \times K}(\theta, I) =(\theta + \partial_IS(I), I).\]
\end{thm}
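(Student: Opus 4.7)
The plan is to follow a classical Newton-type KAM iteration adapted to the exact symplectic setting through generating functions. Given $F \in \mathcal{U}_{\gamma, \tau}$ close to $F_0$, for each frequency $\omega \in \Omega^\tau_\gamma$ I would construct an invariant torus of $F$ with internal rotation vector $\omega$. The unperturbed map $F_0$ admits such a torus at the action value $I_\omega = (\partial_I S_0)^{-1}(\omega)$, which exists by the assumption that $\partial_I S_0$ is a diffeomorphism onto $\Omega$, and this provides the zeroth approximation of the iteration.

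At each inductive step I would have an exact symplectic map $F_n$ close to an integrable $F_0^{(n)}$ with generating function $S_n$, and I would seek a near-identity exact symplectic transformation $\Psi_n$ such that $\Psi_n^{-1} \circ F_n \circ \Psi_n$ has integrable part $F_0^{(n+1)}$ with a new remainder quadratic in $\|F_n - F_0^{(n)}\|$. Producing $\Psi_n$ through its own generating function reduces matters to solving a cohomological equation of the form
\[
u_n(\theta + \omega) - u_n(\theta) = v_n(\theta) - [v_n],
\]
where $v_n$ encodes the first-order effect of the remainder near the $\omega$-torus. The Diophantine condition $\| \langle \omega, k \rangle \| \geq \gamma |k|^{-(d + \tau)}$ allows this equation to be solved via Fourier series at the cost of a loss of analyticity strip $r_n - r_{n+1}$ and constants of order $\gamma^{-1}(r_n - r_{n+1})^{-\nu}$ for some $\nu = \nu(d, \tau) > 0$.

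Choosing $r_n - r_{n+1} \sim 2^{-n}(r - r_\infty)$ with $r_\infty > 0$ fixed, the quadratic gain in the size of the remainder dominates the polynomial blow-up of the constants, provided the initial perturbation is small enough; this determines the size of the required neighbourhood $\mathcal{U}_{\gamma, \tau}$. In the limit one obtains, for each Diophantine $\omega$, a real-analytic embedding $\Sigma_\omega$ onto an invariant torus of $F$ on which the dynamics is conjugate to the rotation by $\omega$, together with a limit action $I_\omega^\infty$ and a limit generating function; the set $K$ is the image of $\Omega^\tau_\gamma$ under $\omega \mapsto I_\omega^\infty$, and $\partial_I S$ is by construction the inverse of this map.

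The delicate point is the Whitney-smooth dependence of all these objects on the parameter $\omega$. I would handle this by running the iteration in parametrized form: every estimate is obtained uniformly for $\omega$ varying in $\Omega^\tau_\gamma$, and the functions $u_n$ and $\Psi_n$ are extended at each step from the set of Diophantine frequencies to a full open neighbourhood via a Whitney extension, maintaining smoothness in $\omega$ jointly with analyticity in $(\theta, I)$. Passing to the limit then yields $S \in C^\infty(K, \R)$ and $\Sigma \in \textup{Symp}^\infty(\T^d \times K, \T^d \times \R^d)$ as stated. This is precisely the refinement of the KAM scheme carried out in \cite{poschel_integrability_1982} and, in the form used here, in \cite{shang_note_2000}, from which the theorem follows.
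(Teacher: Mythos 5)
The paper does not prove this theorem: it is stated explicitly as ``a simplified version of the KAM theorem for exact symplectic transformations found in \cite{shang_note_2000}'' and is used as a black box. Your outline is the standard Newton-type KAM scheme with generating functions, discrete cohomological equations controlled by the Diophantine condition, and a parametrized iteration with Whitney extensions in the frequency to obtain the Whitney-smooth dependence of $S$ and $\Sigma$; this is an accurate sketch of the argument in \cite{poschel_integrability_1982} and \cite{shang_note_2000}, and since you too ultimately defer the quantitative estimates to those references, your treatment is consistent with (indeed slightly more informative than) the paper's, which offers only the citation.
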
 

Using the notations in the previous theorem, it follows that for any $F \in \mathcal{U}_{\gamma, \tau}$ and for all $\omega \in \Omega^\tau_\gamma$ the graph of
\[ \Function{u_\omega}{\td}{\R^d}{\theta}{\Sigma (\theta, \partial_IS^{-1}(\omega))}\]
defines an invariant Lagrangian torus $T_\omega$ for $F$ whose restricted dynamics is conjugated to a translation by $\omega$. We encode the collection of invariant tori $\{T_{\omega}\}_{\omega \in \Omega^\tau_\gamma}$ in the \textit{KAM curve} $T_F$ defined as the Whitney smooth function
\[ \begin{array}{cccc} 
T_F : & \Omega^\tau_{\gamma} & \rightarrow & C^{\infty} (\mathbb{T}^d) \\ 
& \omega & \mapsto & u_\omega
\end{array}. \] 
Following \cite{eliasson_around_2015} we say that a smooth Lagrangian invariant torus whose restricted dynamics are smoothly conjugated to a translation by a Diophantine vector $\omega$ is a \textit{KAM torus} with \textit{rotation vector} $\omega$. 

We can now state the main result of this section.

\begin{thm}
\label{main_symplectic_maps}
Let $F_0,$ $\Omega$, $\mathcal{U}_{\gamma, \tau}$ as in Theorem \ref{KAM_symplectic}. Let $F \in \mathcal{U}_{\gamma, \tau}$ and denote by $T_F$ the associated KAM curve. Suppose 
\[G: \RDom{s} \rightarrow \td \times \R^d\]
 is an analytic exact symplectic embedding and $\Gamma \subset \dioph$ is a $C^\infty$-uniqueness set at $\omega_0 \in \Gamma$. If for all $\omega \in \Gamma$ the graph of $T_F(\omega)$ defines an invariant torus $T_\omega$ for $G$ then the following holds:
 \begin{enumerate}
 \item $T_{\omega_0}$ is an invariant KAM torus for $G$. 
 \item $F$ and $G$ commute on a neighbourhood of $T_{\omega_0}$.
 \item If $T_\omega$ is a KAM torus for $G$ with rotation vector $\omega$ for all $\omega \in \Gamma$ then $F = G$.
\end{enumerate}
 \end{thm}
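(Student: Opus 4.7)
The strategy is to pass to the KAM normal coordinates given by the Whitney-smooth conjugacy $\Sigma$, show that in these coordinates $G$ acquires an ``integrable'' normal form to infinite order at the torus $\T^d\times\{I_{\omega_0}\}$ (where $I_{\omega_0}:=(\partial_I S)^{-1}(\omega_0)$), and then upgrade this flat agreement into equalities on an open neighborhood of $T_{\omega_0}$ via Lemma~\ref{FlatAnalytic}.

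Fix smooth extensions of $\Sigma$ and $S$ to an open neighborhood of $\T^d\times K$ and set $\widetilde F:=\Sigma^{-1}\circ F\circ\Sigma$, $\widetilde G:=\Sigma^{-1}\circ G\circ\Sigma$. The invariance hypothesis becomes $\widetilde G(\T^d\times\{I_\omega\})=\T^d\times\{I_\omega\}$ for all $\omega\in\Gamma$; writing $\widetilde G(\theta,I)=(\theta+\phi(\theta,I),\,I+\psi(\theta,I))$, this forces $\psi(\theta,I_\omega)=0$. By Proposition~\ref{UniSet}(3) the set $K_\Gamma:=\{I_\omega:\omega\in\Gamma\}$ is a $C^\infty$-uniqueness set at $I_{\omega_0}$, and applying the uniqueness property fiber by fiber in $\theta$ makes $\psi$ $C^\infty$-flat at $\T^d\times\{I_{\omega_0}\}$. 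Next, the exact-symplectic identity $\widetilde G^*(I\,d\theta)-I\,d\theta=dW$ reduces, modulo flat terms, to the coboundary equation $\sum_j I_j\,d\phi_j\equiv dW$; a mode-by-mode Fourier analysis in $\theta$ then forces every non-zero $\theta$-mode of $\phi$ to be flat at $I_{\omega_0}$, yielding
\[
\widetilde G(\theta,I)\equiv (\theta+\phi_0(I),\,I)
\]
to infinite order at $\T^d\times\{I_{\omega_0}\}$ for some smooth $\phi_0$. The parallel identity $\widetilde F(\theta,I)\equiv(\theta+\partial_I S(I),\,I)$ holds at the same order since it is exact on $\T^d\times K\supset\T^d\times K_\Gamma$.

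The three conclusions then follow from these integrable normal forms. For Part 2, $\widetilde F\circ\widetilde G$ and $\widetilde G\circ\widetilde F$ both equal $(\theta+\phi_0(I)+\partial_IS(I),\,I)$ to infinite order at $\T^d\times\{I_{\omega_0}\}$, so Lemma~\ref{FlatAnalytic} applied to the analytic compositions $F\circ G$ and $G\circ F$ through the conjugation by $\Sigma$ yields commutativity of $F$ and $G$ on a neighborhood of $T_{\omega_0}$. For Part 3, the extra rotation-vector hypothesis, via unique ergodicity of the restrictions $\widetilde G|_{\T^d\times\{I_\omega\}}$ for $\omega\in\Gamma$, forces $\phi_0-\partial_I S$ to vanish on $K_\Gamma$ to high order, so by the uniqueness property $\phi_0\equiv\partial_I S$ to infinite order at $I_{\omega_0}$; hence $\widetilde F\equiv\widetilde G$ to infinite order, and Lemma~\ref{FlatAnalytic} gives $F=G$. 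Part 1 follows by restricting the normal form to $I=I_{\omega_0}$: $G|_{T_{\omega_0}}$ is then a translation in the $\Sigma$-parameterization, and checking that its rotation vector lies in $\DCd$ (which is what upgrades $T_{\omega_0}$ to a genuine KAM torus) uses commutativity from Part 2 together with the rigidity of smooth torus diffeomorphisms centralized by a Diophantine translation.

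The main obstacle is the exact-symplectic coboundary analysis: tracking the flat remainders carefully through the Fourier argument is what forces $\phi$ to be $\theta$-independent to infinite order. A secondary subtlety is establishing the Diophantine character of $\phi_0(I_{\omega_0})$ in Part 1 without invoking the rotation-vector hypothesis of Part 3, which I expect to require the asymptotics of $\widetilde G|_{\T^d\times\{I_\omega\}}$ as $\omega\to\omega_0$ through $\Gamma$.
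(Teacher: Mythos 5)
Your proposal is correct and follows essentially the same route as the paper: conjugate both maps by the Whitney-smooth KAM normalization $\Sigma$, use the $C^\infty$-uniqueness set to make the action component of $\Sigma^{-1}\circ G\circ\Sigma$ flat at $\td\times\{0\}$, invoke symplecticity to kill the $\theta$-dependence of the angle component modulo flat terms, and finish with Lemma~\ref{FlatAnalytic}; the only cosmetic difference is that the paper extracts $\partial_\theta g_1=\oir$ from the matrix identity $J=D\overline{G}^{\,T}JD\overline{G}$ rather than from the primitive $1$-form identity and a Fourier expansion. The one place you over-engineer is the ``secondary subtlety'' in Part~1: the paper makes no attempt to verify that the rotation vector $[g_1](0)$ of $G$ on $T_{\omega_0}$ is Diophantine (and no such fact holds --- take $G=\mathrm{id}$, which fixes every torus with rotation vector $0$ and commutes with everything, so your proposed rigidity-of-the-centralizer argument cannot force Diophantinicity); the conclusion is simply that the restricted dynamics is a translation in the $\Sigma$-coordinates, exactly as in the Hamiltonian analogue where the rotation vector is explicitly allowed to be an arbitrary $\omega_0^L\in\R^d$.
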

 \begin{proof}
 Let $\Sigma$, $S$ as in Theorem \ref{KAM_symplectic} when applied to $F$. We can assume WLOG that $\Sigma$, $S$ are well-defined smooth functions on open neighbourhoods of $\td \times K$ and $K$ respectively.  Denote 
\[ \overline{F} := \Sigma ^{-1} \circ F \circ \Sigma, \hspace{1cm} \overline{G} := \Sigma ^{-1} \circ G \circ \Sigma.\]
Let us write $\overline{F}, \overline{G}$ as
\[\overline{F} (\theta,I) = (\theta+S(I) + f_1(\theta,I),I + f_2(\theta,I)),\]
\[\overline{G}(\theta, I) = (\theta + g_1(\theta,I),I + g_2(\theta,I)),\]
where $f_1, f_2, g_1, g_2$ are $C^\infty$ functions. Then for all $ (\theta, I) \in \td \times \partial_I S^{-1}(\Gamma)$
 \[ f_1(\theta,I) = 0, \hspace{1cm} f_2(\theta,I) = 0 = g_2(\theta,I). \]
Let us assume WLOG that $\partial_I S^{-1}(\omega_0) = 0$. By Proposition \ref{UniSet}, $\td \times \partial_I S^{-1}(\Gamma)$ is a $C^\infty$-uniqueness set at $(\theta, 0)$ for all $\theta \in \td$. Therefore
\[f_1(\theta, I) = \oir, \hspace{1cm} g_2(\theta, I) = \oir. \]
Since $\overline{G}$ is symplectic it follows that
\[ J = D\overline{G}^T J D\overline{G} \hspace{0.5cm} \text{ where } \hspace{0.5cm} J = \left( \begin{array}{cc}
0 & I_d \\
-I_d & 0 \\
\end{array}
\right)_{2d \times 2d}, \]
and $I_d$ denotes the $d\times d$ identity matrix. A direct calculation yields to \[ 
J = \left( \begin{array}{cc}
0 & I_d + \partial_\theta g_1 \\
-I_d & -\partial_I g_1 + \partial_I g_1^T \\
\end{array}
\right) + \oir.
\]
Hence \[ \partial_\theta g_1(\theta, I) = \oir,\]
which implies 
\[ g_1(\theta, I) = [g_1](I) + \oir,\]
where $[g_1]$ denote the average of $g_1(\theta, I)$ over $\td$. Thus
\[ \overline{F}(\theta,I) = (\theta + \partial_I S(I), I) + \oir,\]
\[\overline{G}(\theta,I) = (\theta + [g_1](I), I) + \oir.\]
This shows that $\td \times \{ 0\}$ is a KAM torus for $\overline{G}$. Hence $T_{\omega_0} = \Sigma(\td \times \{ 0\})$ is a KAM torus for $G$. Furthermore 
\[ \overline{F} \circ \overline{G} = \overline{G} \circ \overline{F} + \oir\]
on a neighbourhood of $\td \times \{ 0\}$. By Lemma \ref{FlatAnalytic} $F$ and $G$ commute on a neighbourhood of $\td \times \{ 0\}$. To prove the last assertion let us suppose that $T_\omega$ is a KAM torus of $G$ with rotation vector $\omega$ for all $\omega \in \Gamma$. Then 
\[ [g_1](I) = \partial_I S(I) + \oir,\]
and 
\[ \overline{F} = \overline{G} + \oir.\]
Hence $F = G$ by Lemma \ref{FlatAnalytic}.
\end{proof}

 \begin{cor}
Let $F_0$, $\mathcal{U}_{\gamma, \tau}$ as in Theorem \ref{KAM_symplectic}. If $F,G \in \mathcal{U}_{\gamma, \tau}$ and $T_F, T_G$ coincide on a $C^\infty$-uniqueness set then $F=G$. 
\end{cor}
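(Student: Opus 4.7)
The plan is to obtain this as an immediate corollary of part (3) of Theorem \ref{main_symplectic_maps}, with $G$ in the role of the auxiliary analytic exact symplectic embedding. Fix a $C^\infty$-uniqueness set $\Gamma \subset \Omega^\tau_\gamma$ on which $T_F$ and $T_G$ agree, and choose a base point $\omega_0 \in \Gamma$ at which $\Gamma$ is a $C^\infty$-uniqueness set. We need to verify the hypotheses of the third assertion of Theorem \ref{main_symplectic_maps}, i.e.\ that for every $\omega \in \Gamma$ the graph of $T_F(\omega)$ is not merely invariant under $G$, but is actually a KAM torus for $G$ with rotation vector exactly equal to $\omega$.

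The key observation is that this upgrade is automatic once we use the extra hypothesis $G \in \mathcal{U}_{\gamma,\tau}$, which was absent from the general statement of Theorem \ref{main_symplectic_maps}. Indeed, applying Theorem \ref{KAM_symplectic} to $G$ itself produces the KAM curve $T_G$, whose defining property is precisely that for each $\omega \in \Omega^\tau_\gamma$ the graph of $T_G(\omega)$ is a KAM torus for $G$ whose restricted dynamics is smoothly conjugate to a translation by $\omega$. Because $T_F(\omega) = T_G(\omega)$ as elements of $C^\infty(\T^d)$ for every $\omega \in \Gamma$, the graphs coincide, and the KAM-torus property with rotation vector $\omega$ transfers verbatim from $T_G(\omega)$ to $T_F(\omega)$.

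With this in hand, the hypotheses of conclusion (3) of Theorem \ref{main_symplectic_maps} are met, and the theorem directly yields $F = G$. There is essentially no obstacle here beyond checking this transfer of the rotation vector: the whole point is that we are in a symmetric situation where $G$ is itself close to the integrable $F_0$, so the conjugating data $(\Sigma_G, S_G)$ produced by Theorem \ref{KAM_symplectic} guarantee that the rotation vector on $T_G(\omega)$ is exactly $\omega$, not merely some vector with the same orbit structure.
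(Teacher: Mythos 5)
Your proof is correct and follows exactly the route the paper intends: the corollary is stated without proof as an immediate consequence of assertion (3) of Theorem \ref{main_symplectic_maps}, and your observation that the hypothesis $G \in \mathcal{U}_{\gamma,\tau}$ is precisely what upgrades the graphs of $T_F(\omega)=T_G(\omega)$, $\omega\in\Gamma$, to KAM tori for $G$ with rotation vector $\omega$ is the only point that needed checking.
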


\subsection{Hamiltonian systems}
\label{sc: flows}

Let us state a simplified version of the KAM theorem for Hamiltonians systems found in \cite{poschel_integrability_1982}. 

\begin{thm}[KAM theorem for Hamiltonians]
\label{KAM_Hamiltonian}
Let $r, s > 0$ and $d \in \N$. Suppose $H_0 : \CBd{s} \rightarrow \C$ real analytic such that $\partial_I H_0\mid_{\CBd{s}}$ is a diffeomorphism onto its image and 
\[ \|\partial^2_IH_0\|_{\CBd{s}}, \|\partial^2_IH_0^{-1}\|_{\CBd{s}} < +\infty.\]
Let 
\[\Omega = \partial_I H_0(I).\]
Given $\gamma, \tau > 0$ there exists an open neighbourhood $\mathcal{U}_{\gamma, \tau}$ of $H_0$ in $C^\omega_\R(\CDom{r}{s}, \C)$ such that for any $H \in \mathcal{U}_{\gamma, \tau}$ there exist a Cantor set $K \subset \RBd{s}$ and Whitney smooth functions 
\[ h \in C^\infty(K, \R), \hone \Sigma \in \textup{Symp}^\infty(\T^d \times K, \T^d \times \R^d),\]
such that $\partial_I h : K \rightarrow \Omega^\tau_\gamma$ is a bijection and
\[ H \circ \Sigma \mid_{\td \times K}(\theta, I) = h(I), \hone X_{H \circ \Sigma} \mid_{\td \times C}(\theta, I) = (\partial_I h(I),0).\]
\end{thm}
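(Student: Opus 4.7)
The plan is to run the classical KAM iteration scheme, following P\"oschel's parameter-dependent formulation. Treating $\omega \in \Omega^\tau_\gamma$ as a parameter, for each such frequency one sets $I_*(\omega) = (\partial_I H_0)^{-1}(\omega)$ and seeks a real-analytic exact symplectic transformation $\Sigma_\omega$ with $\Sigma_\omega(\td \times \{0\})$ close to $\td \times \{I_*(\omega)\}$ and
\[ H \circ \Sigma_\omega(\theta, I) = e(\omega) + \langle \omega, I\rangle + O(|I|^2),\]
so that $\td \times \{0\}$ becomes an invariant KAM torus with rotation vector $\omega$. Gluing these local pictures over $\omega \in \Omega^\tau_\gamma$ will ultimately give the Whitney-smooth $\Sigma$ of the statement.

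The iteration proceeds by Newton steps. Given an approximate solution with error of size $\varepsilon$ on $\CDom{r}{s}$, one expands the perturbation to first order in $I$ around $I = 0$ and solves the two cohomological equations $\langle \omega, \nabla_\theta\rangle u = f - [f]$ and $\langle \omega, \nabla_\theta\rangle v = g - [g]$, where $f$ and $g$ are the $\theta$-dependent analytic data extracted from the current error. The Diophantine condition gives the Fourier estimate $\|u\|_{\td_{r-\delta}} \leq C\gamma^{-1}\delta^{-(d+\tau)}\|f\|_{\td_r}$. A near-identity symplectic transformation generated by $\langle \omega, I\rangle + u(\theta) + \langle v(\theta), I\rangle$, together with a small translation in $I$ to cancel the drift of the average frequency, reduces the error to size $C\varepsilon^2 \delta^{-2(d+\tau)}$ on $\CDom{r-\delta}{s/2}$. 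Choosing a geometric schedule $\delta_n = \delta_0 2^{-n}$ and tracking $\varepsilon_{n+1} \sim \varepsilon_n^2 \delta_n^{-2(d+\tau)}$ yields super-exponential convergence on a nonempty nested intersection of complex domains, producing $\Sigma_\omega$ and $e(\omega)$.

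The dependence on $\omega$ is handled uniformly throughout the iteration. The Fourier multipliers $1/\langle k, \omega\rangle$ are smooth in $\omega$ on $\Omega^\tau_\gamma$ and the Diophantine bound is uniform there, so each step produces transformations that are $C^\infty$-smooth in $\omega$ with uniform analytic estimates in $(\theta,I)$. Tracking these bounds through the iteration and invoking the Whitney extension theorem yields $\Sigma_\omega$ and $e(\omega)$ Whitney-smooth in $\omega \in \Omega^\tau_\gamma$. Setting $h(I_*(\omega)) = e(\omega)$ and inverting the map $\omega \mapsto I_*(\omega)$, which is a near-identity perturbation of $(\partial_I H_0)^{-1}$ thanks to the non-degeneracy assumption on $\partial^2_I H_0$, transfers the Whitney smoothness to the $I$-variable and delivers the Cantor set $K \subset \RBd{s}$ together with the Whitney-smooth $h$ and $\Sigma$ such that $\partial_I h : K \to \Omega^\tau_\gamma$ is a bijection and $H \circ \Sigma$ restricts to $h(I)$ on $\td \times K$, from which the Hamiltonian vector field formula follows by differentiation.

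The main obstacle is the small-divisor problem, reflected in the factor $\gamma^{-1}\delta^{-(d+\tau)}$ at each step: without the quadratic nature of the Newton scheme the accumulated losses $\prod_n \delta_n^{-(d+\tau)}$ would diverge, and without uniform control over $\omega$ the Whitney extension would fail. The delicate balance between the geometric rate of $\delta_n$, the quadratic convergence $\varepsilon_{n+1} \sim \varepsilon_n^2 \delta_n^{-2(d+\tau)}$, and the shrinkage of the $I$-domain required to keep successive iterates composable constitutes the technical heart of the argument, together with the estimation of finite differences of the solutions of the cohomological equation at nearby Diophantine frequencies that underpins the Whitney-smooth dependence.
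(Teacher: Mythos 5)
The paper does not prove this statement at all: it is quoted, explicitly as ``a simplified version of the KAM theorem for Hamiltonian systems found in \cite{poschel_integrability_1982}'', and used as a black box. There is therefore no proof in the paper to compare yours against; your outline is the standard parameter-dependent Newton scheme that underlies P\"oschel's result, and at the level of a sketch it is consistent with the cited source. One point you gloss over deserves attention if you intend this as an actual proof rather than a citation: the theorem asserts not merely that each torus carries frequency $\omega$, but that the single Whitney-smooth function $h$ on the Cantor set $K$ simultaneously satisfies $H \circ \Sigma = h(I)$ and $\partial_I h(I) = \omega$ at $I = I_*(\omega)$. Defining $h$ pointwise by $h(I_*(\omega)) = e(\omega)$ does not by itself give this; one must verify the Whitney compatibility condition $\tfrac{de}{d\omega} = \omega \cdot DI_*(\omega)$ (the Cantor-set analogue of $\tfrac{d}{d\omega} H_0(I_*(\omega)) = \langle \omega, DI_*(\omega)\rangle$), which is precisely the content of P\"oschel's ``integrability over a Cantor set'' and requires tracking the first-order jets of $e$ and $I_*$ through the iteration, not just their values. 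Absent that verification, the claim that $\partial_I h : K \to \Omega^\tau_\gamma$ is a bijection is unsupported.
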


Similar to the exact symplectic transformations case and using the notations in the previous theorem, it follows that for any $H \in \mathcal{U}_{\gamma, \tau}$ and for all $\omega \in \Omega^\tau_\gamma$ the graph of
\[ \Function{u_\omega}{\td}{\R^d}{\theta}{\Sigma (\theta, \partial_I h^{-1}(\omega))}\]
defines an invariant Lagrangian torus $T_\omega$ for $\Phi^t_H$ whose restricted dynamics is smoothly conjugated to a continuous translation by $\omega$. We encode the collection of invariant tori $\{T_{\omega}\}_{\omega \in \Omega^\tau_\gamma}$ in the \textit{KAM curve} $T_H$ defined as the Whitney smooth function
\[ \begin{array}{cccc} 
T_F : & \Omega^\tau_{\gamma} & \rightarrow & C^{\infty} (\mathbb{T}^d) \\ 
& \omega & \mapsto & u_\omega
\end{array}. \] 
As before, we say that a smooth Lagrangian invariant torus whose restricted dynamics are smoothly conjugated to a discrete translation by a Diophantine vector $\omega$ is a \textit{KAM torus} with \textit{rotation vector} $\omega$. The following is an analogous of Theorem \ref{main_symplectic_maps} in the Hamiltonian case.

\begin{thm}
\label{main_hamiltonians}
Let $H_0$, $\Omega$, $\mathcal{U}_{\gamma, \tau}$ as in Theorem \ref{KAM_Hamiltonian}. Let $H\in \mathcal{U}_{\gamma, \tau}$ and denote by $T_H$ the associated KAM curve. Suppose 
 \[ L  : \RDom{s} \rightarrow \R \] 
 is analytic and $\Gamma \subset \dioph$ is a $C^\infty$-uniqueness set at $\omega_0 \in \Gamma$ such that for all $\omega \in \Gamma$ the function $T_H(\omega)$ defines an invariant torus $T_\omega$ for the Hamiltonian flow $\Phi_L^t$. Then the following holds:
 \begin{enumerate}
 \item $T_{\omega_0}$ is a KAM torus for $L$ with rotation vector $\omega_0^L$ for some $\omega_0^L \in \R^d$. 
 \item If $\omega_0^L \parallel \omega_0$ then $X_L \parallel X_H$ on $T_{\omega_0}$.
 \item The flows $\Phi^t_H$ and $\Phi^t_L$ commute on a neighbourhood of $T_{\omega_0}$.
 \item If $T_\omega$ is a KAM torus for $L$ with rotation vector $\omega^L \parallel \omega$ for all $\omega \in \Gamma$ there exist an analytic function $\varphi$ such that $L = \varphi \circ F$ on an open neighbourhood of $T_{\omega_0}$.
\end{enumerate}
 \end{thm}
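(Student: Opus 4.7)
The plan is to follow the blueprint of Theorem \ref{main_symplectic_maps}, adding a functional-dependence argument for item (4). Apply Theorem \ref{KAM_Hamiltonian} to $H$ to obtain the KAM normal form $\mathcal{H} := H \circ \Sigma$ with $\mathcal{H}|_{\td \times K} = h(I)$, where $\Sigma$ is smooth and symplectic. Assume WLOG $\partial_I h^{-1}(\omega_0) = 0$ and set $\mathcal{L} := L \circ \Sigma$. Invariance of $T_\omega = \Sigma(\td \times \{I_\omega\})$ under $\Phi^t_L$ for $\omega \in \Gamma$ translates, via $\Sigma$, to invariance of $\td \times \{I_\omega\}$ under $\Phi^t_\mathcal{L}$; this forces $\partial_\theta \mathcal{L}(\theta, I_\omega) = 0$, i.e.\ $\mathcal{L}(\theta, I_\omega) = [\mathcal{L}](I_\omega)$. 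Proposition \ref{UniSet} upgrades this equality on $\td \times \partial_I h^{-1}(\Gamma)$ to flatness at $\td \times \{0\}$, and the same reasoning applies to $\mathcal{H} - h$, giving
\[
\mathcal{H}(\theta, I) = h(I) + \oir, \qquad \mathcal{L}(\theta, I) = [\mathcal{L}](I) + \oir.
\]

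Items (1)--(3) then follow quickly. On $\td \times \{0\}$ all flat terms and their derivatives vanish, so $X_\mathcal{H} \equiv (\omega_0, 0)$ and $X_\mathcal{L} \equiv (\omega_0^L, 0)$ with $\omega_0^L := \partial_I [\mathcal{L}](0)$, proving (1), and (2) is immediate from the parallelism of the two constant vector fields. For (3), each summand of $\{\mathcal{H}, \mathcal{L}\} = \partial_\theta \mathcal{H} \cdot \partial_I \mathcal{L} - \partial_I \mathcal{H} \cdot \partial_\theta \mathcal{L}$ carries a $\partial_\theta$ factor that is $\oir$; hence $\{H, L\} \circ \Sigma = \oir$. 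Since $\{H, L\}$ is analytic, flatness at a point of $T_{\omega_0}$ forces identical vanishing on the whole connected analytic domain, and Proposition \ref{basics_Hamiltonians}(4) yields the commutation of flows.

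For (4), by the conclusion of (3) and analytic continuation, $\{H, L\} \equiv 0$ on the whole analytic domain. Fix $\omega \in \Gamma$: since $T_\omega$ is a KAM torus for $H$, $X_H|_{T_\omega}$ is smoothly conjugate by some diffeomorphism $\phi_\omega$ of $\td$ to the rigid Diophantine translation with vector $\omega$. A classical centralizer computation (if $[X, \omega \cdot \partial_\theta] = 0$ then $(\omega \cdot \nabla) X = 0$, hence $X$ is constant when $\omega$ is Diophantine) shows that in $\phi_\omega$-coordinates $X_L|_{T_\omega}$ is a constant vector field, necessarily equal to the rotation vector $\omega^L$. Transported back to $\Sigma$-coordinates on $T_\omega$ this reads, pointwise, $X_H = D\phi_\omega \cdot \omega$ and $X_L = D\phi_\omega \cdot \omega^L$; the hypothesis $\omega^L \parallel \omega$ then yields $X_L \parallel X_H$ at every point of $T_\omega$, equivalently $dL \wedge dH = 0$ on $T_\omega$. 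By Proposition \ref{UniSet}, the analytic 2-form $dL \wedge dH$, vanishing on $\bigcup_{\omega \in \Gamma} T_\omega$, is flat at $T_{\omega_0}$, and analyticity promotes this to identical vanishing on a neighborhood. Since $\omega_0 \neq 0$ gives $dH \neq 0$ on $T_{\omega_0}$, the classical fact that $dL \wedge dH = 0$ together with $dH \neq 0$ forces $L$ to be locally an analytic function of $H$ produces the desired $\varphi$.

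The hard part will be the rigidity step in (4): in $\Sigma$-coordinates neither $X_H$ nor $X_L$ is constant on $T_\omega$ for $\omega \neq \omega_0$, so parallelism of the intrinsic rotation vectors does not manifestly yield pointwise parallelism of the Hamiltonian vector fields. The resolution exploits the commutation from (3): both flows are straightened by the \emph{same} diffeomorphism $\phi_\omega$, so their $\Sigma$-coordinate expressions on $T_\omega$ differ only through a common Jacobian factor acting on their respective rotation vectors, which preserves parallelism.
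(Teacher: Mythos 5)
Your treatment of assertions (1)--(3) is correct and is essentially the paper's own argument: the same reduction to $\mathcal{H}=H\circ\Sigma$, $\mathcal{L}=L\circ\Sigma$, the same flatness statements $\mathcal{H}=[\mathcal{H}](I)+\oir$ and $\mathcal{L}=[\mathcal{L}](I)+\oir$ obtained from invariance plus Proposition \ref{UniSet}, and the same conclusion for the Poisson bracket followed by analytic rigidity. For assertion (4) you take a genuinely different route. The paper stays in $\Sigma$-coordinates: it extracts from the hypothesis a smooth scalar $\gamma(I)$ with $\partial_I[\mathcal{L}]=\gamma\,\partial_I[\mathcal{H}]+\oir$, hence $X_{\mathcal{H}}=\gamma X_{\mathcal{L}}+\oir$, and upgrades this to exact everywhere-collinearity of the analytic fields by verifying the projection identity $X_H=\langle X_H,X_L\rangle X_L/\Vert X_L\Vert^2$ modulo flat terms and invoking Lemma \ref{FlatAnalytic}. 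You instead globalize $\{H,L\}\equiv 0$ from (3) by analytic continuation and run a Diophantine centralizer argument on each common torus $T_\omega$, $\omega\in\Gamma$: since $X_H\vert_{T_\omega}$ straightens to the translation by $\omega$ and $X_L\vert_{T_\omega}$ commutes with it, the latter is the push-forward of a constant field, which the hypothesis forces to be parallel to $\omega$; exact pointwise collinearity on the uniqueness set $\bigcup_{\omega\in\Gamma}T_\omega$ then annihilates the analytic $2$-form $dL\wedge dH$, and both proofs finish identically ($dH\neq 0$ on $T_{\omega_0}$ plus $dL\wedge dH\equiv 0$ gives $L=\varphi\circ H$). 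Your version buys something real: it sidesteps the paper's implicit identification of the rotation vector of $T_\omega$ for $\omega\neq\omega_0$ with the Lebesgue average $\partial_I[\mathcal{L}](I_\omega)$ (off the central torus these are averages against different invariant measures), and it avoids producing a Whitney-smooth extension of $\gamma$ from the Cantor set $\Gamma$. Two small points should be made explicit: the constant field produced by the centralizer argument equals $\omega^L$ only after fixing the homological marking (e.g.\ defining rotation vectors via the graph projection, so that all conjugacies in play act trivially on $H_1(\td)$ and the translation vector is the asymptotic cycle), and the differential in your transport formula should be $D\phi_\omega^{-1}$ rather than $D\phi_\omega$; with these fixes the argument is complete.
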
 
 \begin{proof} Let $\Sigma, h$ as in Theorem \ref{KAM_Hamiltonian} when applied to $H$. We can assume WLOG that $\Sigma$, $h$ are well-defined smooth functions on open neighbourhoods of $\td \times K$ and $K$ respectively.  Denote
 \[ \mathcal{H} = H \circ \Sigma, \hspace{1cm} \mathcal{L} = L \circ \Sigma. \]
Let us suppose WLOG that $h^{-1}(\omega_0) = 0.$ Since $\td \times h^{-1}(\Gamma)$ is a $C^\infty$-uniqueness set at $(\theta,0)$ for all $\theta \in \td$ and it is invariant by the Hamiltonian flows $\Phi_{\mathcal{H}}^t$, $\Phi_{\mathcal{L}}^t$. 
Then 
\[ \partial_\theta \mathcal{H} = \oir, \hspace{1cm} \partial_\theta \mathcal{L} = \oir.\]
Hence
\[ \mathcal{H} = [\mathcal{H}](I) + \oir, \hspace{1cm} \mathcal{L}(\theta, I) = [\mathcal{L}](I) + \oir, \]
where $[\cdot]$ denote the average of the function over $\td$. By definition of $h$ 
\[ \partial_I [\mathcal{H}](I) = h(I) + \oir.\]
Thus
 \begin{equation}
\label{FlatFields}
\begin{array}{c}
 X_{\mathcal{H}}(\theta,I) = (h(I),0) + \oir,\\
 X_{\mathcal{L}}(\theta,I) = (\partial_I \mathcal{[L]}(I),0) + \oir.
 \end{array}
 \end{equation}
 In particular
 \begin{equation}
 \label{RotationVector} 
 X_{\mathcal{L}}(\theta, 0) = (\partial_I \mathcal{[L]}(0),0).
 \end{equation}
Hence $T_{\omega_0} = \Sigma(\td \times \{ 0\})$ is a KAM torus for $L$ with rotation vector 
\[\omega^L = \partial_I \mathcal{[L]}(0),\]
which proves the first assertion.  If $\omega^L \parallel \omega_0$ it follows from (\ref{FlatFields}) that $X_{\mathcal{H}} \parallel X_{\mathcal{L}}$ restricted to $T_{\omega_0}$. Thus by Proposition \ref{basics_Hamiltonians}, the vector fields $X_H, X_L$ are collinear on $T_{\omega_0}$. This shows the second assertion.  To prove the third one recall that the flows $\Phi^t_H,$ $\Phi^t_L$ commute if and only if the Poisson bracket $[X_H, X_L]$ vanishes. From (\ref{FlatFields})
\[ X_{\{ \mathcal{H}, \mathcal{L}\}} = \oir\]
and by Proposition \ref{basics_Hamiltonians}
 \[ [X_H, X_L] = X_{\{ H,L\}} = X_{\{ \mathcal{H}, \mathcal{L}\} \circ \Sigma^{-1}} = D\Phi \circ X_{\{ \mathcal{H}, \mathcal{L}\}} \circ \Sigma^{-1}. \]
By Lemma \ref{FlatAnalytic} it follows that 
\[ [X_H, X_L] = 0, \]
that is, $\Phi_{\mathcal{H}}^t$ and $\Phi_{\mathcal{L}}^t$ commute on a neighbourhood of $T_{\omega_0}$. To prove the last assertion let us suppose that $T_\omega$ is a KAM torus for $L$ with rotation vector $\omega^L \parallel \omega$ for all $\omega \in \Gamma$. Then there exist a smooth function $\gamma$ such that 
\[
 \partial_I[\mathcal{L}](I) = \gamma(I) h(I) + \oir. 
\]
Hence
\begin{equation}
\label{collinear} X_\mathcal{H} = \gamma X_\mathcal{L} + \oir.
 \end{equation}
Let us show that $X_H$ and $X_L$ are always collinear, that is
\[X_H = \langle X_H, X_L \rangle \dfrac{X_L }{\Vert X_L \Vert^2}. \]
By Lemma \ref{FlatAnalytic} it suffices to show that 
\[ X_H \circ \Sigma = \langle X_H \circ \Sigma, X_L \circ \Sigma \rangle \dfrac{X_L \circ \Sigma}{\Vert X_L \circ \Sigma \Vert^2} + \oir.\]
Developing the RHS and by (\ref{collinear})
\begin{align*}
\langle X_H \circ \Sigma, X_L \circ \Sigma \rangle \dfrac{X_L \circ \Sigma}{\Vert X_L \circ \Sigma \Vert^2} & = \langle D\Sigma \cdot X_\mathcal{H},D\Sigma \cdot X_\mathcal{L}\rangle \dfrac{D\Sigma \cdot X_\mathcal{L}}{\Vert D\Sigma \cdot X_\mathcal{L} \Vert^2} \\
& = \gamma D\Sigma \cdot X_\mathcal{L} + \oir \\
& = D\Sigma \cdot X_H + \oir\\
& = X_H \circ \Sigma + \oir
\end{align*}
Thus $X_H$, $X_L$ are everywhere collinear. In particular the level sets of $H$ and $L$ coincide. Let $p \in T_{\omega_0}$ and let us suppose WLOG that $H(p) = 0$. As $H$ is constant on every KAM torus it follows that $T_{\omega_0} \subset H^{-1}(0)$. By the implicit function theorem there exist an analytic diffeomorphism 
 \[ \Psi: (-\epsilon,\epsilon) \times U \subset \R \times \R^{d-1} \rightarrow W \subset \R^d\]
 such that 
 \[ H \circ \Psi (a,v) = a, \hspace{1cm} \Psi(0,0) = p.\]
Since $H$ and $L$ have the same level sets there exists $\varphi: (-\epsilon, \epsilon) \rightarrow \R$ analytic such that 
\[ L \circ \Psi (a,v) = \varphi(a).\]
Hence $L = \varphi \circ H$ on $W$, but clearly this equality holds also on the connected component of $T_{\omega_0}$ inside $H^{-1}(-\epsilon,\epsilon)$. This completes the proof.
\end{proof}

In the Hamiltonian case a little more can be said for systems sharing a sufficiently big collection of tori even if the rotation vectors on these tori are not collinear. 

\begin{thm}
\label{SimConjug}
Let $H_0$, $\Omega$, $\mathcal{U}_{\gamma, \tau}$ as in Theorem \ref{KAM_Hamiltonian}. Let $L_1 \in \mathcal{U}_{\gamma, \tau}$ and denote by $T_{L_1}$ its associated KAM curve. Suppose 
\[L_2,\dots,L_{d}: \RDom{s} \rightarrow \R\]
are analytic and $\Gamma \subset \dioph$ is a $C^\infty$-uniqueness set at $\omega_0 \in \Gamma$ such that for all $\omega \in \Gamma$ the function $T_{L_1}(\omega)$ defines an invariant torus $T_\omega$ for the Hamiltonian flow $\Phi^t_{L_i}$  and any $i \in \{1, \dots, d\}$. Denote by $\omega^{i}$ the rotation vector of $T_{\omega_0}$ under the flow $\Phi^t_{L_i}$ (see Theorem \ref{main_hamiltonians}). If \[\omega^1, \omega^2, \dots, \omega^{d} \text{ are linearly independent,}\]
then there exist a symplectic change of coordinates $\psi$ conjugating $L_1, \dots , L_{d}$ simultaneously to completely integrable Hamiltonians in a neighbourhood of $T_{\omega_0}$.
\end{thm}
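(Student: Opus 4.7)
The plan is to verify the hypotheses of the Arnold--Liouville--Mineur theorem (Theorem \ref{arnold_liouville}) for the map $F = (L_1,\ldots,L_d)$ in a neighbourhood of $T_{\omega_0}$: pairwise involution of the $L_i$, generic independence of $dL_1,\ldots,dL_d$, and the identification of $T_{\omega_0}$ as a compact connected component of a regular level set of $F$. The first input comes from applying Theorem \ref{main_hamiltonians} with $H = L_1$ to each $L_i$ for $i \geq 2$: this gives that $T_{\omega_0}$ is a KAM torus for $L_i$ with rotation vector $\omega^i$ and that $\{L_1, L_i\} = 0$ on a neighbourhood of $T_{\omega_0}$.

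To obtain $\{L_i,L_j\} = 0$ for all $i,j \geq 2$ one cannot invoke Theorem \ref{main_hamiltonians} directly, since $L_i$ is not assumed to lie in $\mathcal{U}_{\gamma,\tau}$. Instead I would work in the coordinates of the Whitney-smooth symplectic map $\Sigma$ provided by Theorem \ref{KAM_Hamiltonian} applied to $L_1$, taken WLOG to be a smooth diffeomorphism on an open neighbourhood of $\td \times \{0\}$ with $h^{-1}(\omega_0) = 0$. Writing $\mathcal{L}_i = L_i \circ \Sigma$, the same argument from the proof of Theorem \ref{main_hamiltonians} --- invariance of $\td \times h^{-1}(\Gamma)$ by each $\Phi^t_{\mathcal{L}_i}$ together with the $C^\infty$-uniqueness property --- yields $\mathcal{L}_i(\theta, I) = [\mathcal{L}_i](I) + \oir$ for every $i$. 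A direct Poisson bracket computation then gives $\{\mathcal{L}_i, \mathcal{L}_j\} = \oir$, and Proposition \ref{basics_Hamiltonians} identifies this with $\{L_i,L_j\}\circ \Sigma$. Since $\{L_i,L_j\}$ is analytic and vanishes to infinite order at the submanifold $T_{\omega_0} = \Sigma(\td \times \{0\})$, the identity-principle argument underlying Lemma \ref{FlatAnalytic} forces $\{L_i,L_j\} = 0$ on a neighbourhood of $T_{\omega_0}$.

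For the independence, still in $\Sigma$-coordinates one has $X_{\mathcal{L}_i}(\theta,0) = (\omega^i,0)$; linear independence of $\omega^1,\ldots,\omega^d$ thus yields linear independence of $X_{L_1},\ldots,X_{L_d}$ on $T_{\omega_0}$, which by non-degeneracy of the canonical symplectic form translates into linear independence of $dL_1,\ldots,dL_d$ there and, by continuity, on an open neighbourhood of $T_{\omega_0}$. Setting $c_i = L_i(p_0)$ for some $p_0 \in T_{\omega_0}$ and $F = (L_1 - c_1,\ldots,L_d - c_d)$, the torus $T_{\omega_0}$ is contained in $F^{-1}(0)$ because each $\mathcal{L}_i$ equals the constant $[\mathcal{L}_i](0)$ on $\td \times \{0\}$; a dimension count combined with regularity of $F$ shows that $T_{\omega_0}$ is a compact connected component of the corresponding local level set. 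Theorem \ref{arnold_liouville} then produces the desired symplectomorphism $\psi$ for which $F \circ \psi^{-1}$, and hence each $L_i \circ \psi^{-1}$, depends only on $I$. The main obstacle is the second step, extending pairwise involution from $\{L_1, L_i\}$ to all pairs without the hypothesis $L_i \in \mathcal{U}_{\gamma,\tau}$; this is where the flatness argument inside the KAM normal form of $L_1$ is indispensable.
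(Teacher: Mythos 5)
Your proof is correct, but it takes a genuinely different route from the paper. You verify the hypotheses of the Arnold--Liouville--Mineur theorem (Theorem \ref{arnold_liouville}) for $\mathbf{L}=(L_1,\dots,L_d)$ --- pairwise involution via the flatness $\partial_\theta\mathcal{L}_i=\oir$ in the $\Sigma$-coordinates followed by the identity principle for the analytic functions $\{L_i,L_j\}$, independence of the $dL_i$ from the linear independence of $\omega^1,\dots,\omega^d$, and the identification of $T_{\omega_0}$ as a compact connected component of a regular level set --- and then invoke that theorem as a black box. (You are right that involution must be checked for \emph{all} pairs, not just those containing $L_1$; note the paper's statement of Theorem \ref{arnold_liouville} omits the involution hypothesis, but the actual theorem requires it and your flatness argument supplies it.) The paper instead constructs the conjugacy explicitly: it takes $\Psi(\theta,I)=(\theta,\mathbf{L}(\theta,I))$ (suitably corrected by two elementary symplectic maps $\phi_1,\phi_2$ so that the torus $T_{\omega_0}$ is first straightened to $\td\times\{0\}$), shows $\Psi$ is a local diffeomorphism using exactly the same Jacobian computation with the rows $\omega^1,\dots,\omega^d$ that you use for independence, deduces integrability directly from the invariance of the straightened tori plus the uniqueness-set property (no involution needed), and then upgrades $\Psi$ to a symplectomorphism by a Moser-trick Darboux lemma for Lagrangian foliations (Proposition \ref{LagFol}). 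Your argument is shorter and reuses a classical theorem the paper states but never applies; the paper's argument is self-contained modulo Proposition \ref{LagFol} and produces an explicitly \emph{analytic} conjugacy, whereas the $C^\infty$ version of Arnold--Liouville--Mineur as stated only guarantees a smooth one (harmless for the theorem as worded, but worth noting given the analytic context, though the standard action-angle construction from analytic integrals is in fact analytic).
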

 
\begin{proof} In the following we will show the existence of an analytic change of coordinates $\psi$, not necessarily symplectic, conjugating simultaneously $L_1, \dots, L_d$ to integrable Hamiltonians. We will discuss in Section \ref{sc: upgrade_conjugacy} how this argument can be adapted to define $\psi$ to be symplectic. This will rely on a `Darboux lemma' for Lagragian foliations (Proposition \ref{LagFol}).

Let $\Sigma, h$ as in Theorem \ref{KAM_Hamiltonian} when applied to $L_1$. We can assume WLOG that $\Sigma$, $h$ are well-defined smooth functions on open neighbourhoods of $\td \times K$ and $K$ respectively.  Denote $\mathbf{L} = (L_1,\dots, L_{d}),$ and define
\[ \Function{\Psi}{\RDom{s}}{\TR}{(\theta, I)}{ (\theta,\mathbf{L}(\theta, I))}.\]
Since the function $\mathbf{L}$ is constant on every common invariant torus, for every $\omega \in \Gamma$ there exist an unique vector $\mathfrak{h}(\omega) \in \R^d$ such that
\[ \Psi(T_\omega) = \td \times \{ \mathfrak{h}(\omega) \}.\]
An explicit formula for $\mathfrak{h}$ can be retrieved by means of the function $h$. Indeed 
\[ \mathfrak{h}(\omega) = [\mathbf{L}\circ \Sigma](h^{-1}(\omega)).\]
In particular, $\mathfrak{h}$ is a Whitney smooth function. Suppose for a moment that $\Psi$, when restricted to a sufficiently small neighbourhood of $\td \times \{0\}$, is a symplectomorphism onto its image and denote 
\[ \mathfrak{L}_i = L_i \circ \Psi^{-1}\]
for all $i = 1,\dots,d$. By Proposition \ref{basics_Hamiltonians} the flow generated by the vector field $X_{L_i}$ associated to $(L_i,\omega)$ is equivalent to the flow generated by the vector field $Y_{\mathfrak{L}_i}$ associated to $(\mathfrak{L}_i, (\Psi^{-1})^\ast(\omega)).$ By the invariance of $\td \times \mathfrak{h}(\Gamma)$ under the flow given by $Y_{\mathfrak{L}_i}$ it follows that 
\begin{equation}
\label{eq: Y_invariance}
Y_{\mathfrak{L}_i}(\theta, I) = (Y_i(\theta, I),0)
\end{equation}
for all $(\theta, I) \in \td \times \mathfrak{h}(\Gamma)$ and for some analytic function $Y_i$. By Proposition \ref{UniSet}, $\td \times \mathfrak{h}(\Gamma)$ is a $C^\infty$-uniqueness set at $(\theta, \mathfrak{h}(\omega_0))$ for all $\theta \in \td$. Since $Y_i$ is analytic, it follows that (\ref{eq: Y_invariance}) holds for all $(\theta, I)$, which shows the integrability of the Hamiltonian $\mathfrak{L}_i$. Hence, supposing that $\Psi$ restricted to a sufficiently small neighbourhood of $\td \times \{0\}$, the theorem follows by taking $\psi = \Psi^{-1}$. 

Therefore it suffices to show that $\Psi$ restricted to a sufficiently small neighbourhood of $T_{\omega_0}$ is a diffeomorphism onto its image. To simplify the notation let us suppose that $h^{-1}(\omega_0) = 0$. Since $T_{\omega_0} = \Sigma(\td \times \{0\})$ it suffices find a neighbourhood $U$ of $\td \times \{ 0\}$ such that $\Psi \circ \Sigma \mid_U$ is a diffeomorphism onto its image. Denote 
\[ \mathcal{L}_i = L_i \circ \Sigma\]
for all $i = 1,\dots,d$. As the pair $L_1,L_i$ satisfy the hypotheses of Theorem \ref{main_hamiltonians} for $i = 1,\dots,d,$ equation (\ref{RotationVector}) holds replacing $L$ by $L_i$ which yields to 
\begin{equation}
\label{FlatVectorField}
X_{\mathcal{L}_i}(\theta,0) = (\partial_I\mathcal{L}_i(\theta,0), -\partial_\theta\mathcal{L}_i(\theta,0)) = (\omega^i,0) 
\end{equation}
for all $\theta \in \td$. Thus 
\[ D(\Psi \circ \Sigma) (\theta, 0) = \left( \begin{array}{cc}
I_{d} & 0 \\
\ast & \begin{array}{cccc} \omega^1 \\ \omega^2 \\ \dots \\ \omega^{d} \end{array} \\
\end {array} \right)\]
for all $\theta \in \td$. By hypotheses $\omega^1, \omega^2, \dots, \omega^{d}$ are linearly independent which shows that $\Psi \circ \Sigma$ is a local diffeomorphism on a small neighbourhood $U$ of $\td \times \{ 0\}$. Since $\Psi \circ \Sigma$ restricted to $\td \times \{ 0\}$ is injective we can suppose, up to consider a smaller neighbourhood, that $\Psi \mid_U$ is a diffeomorphism onto its image. This concludes the proof. \end{proof}

For $d = 2$, Theorems \ref{main_hamiltonians} and \ref{SimConjug} imply the following.

\begin{thm}
For $d = 2$ and under the same hypotheses of Theorem \ref{main_hamiltonians} we have the following dichotomy: 
\begin{enumerate}
\item If $\omega^L \nparallel \omega_0$ there exist a symplectomorphism conjugating $H$ and $L$ simultaneously to completely integrable systems in a neighbourhood of $T_{\omega_0}$.
\item If $\omega^L \parallel \omega_0$ the vector fields $X_H, X_L$ are collinear on $T_{\omega_0}$. Furthermore, either \[ K = \varphi \circ H\] in a neighbourhood of $T_{\omega_0}$ for some analytic function $\varphi$ or every neighbourhood of $T_{\omega_0}$ contains open connected sets completely foliated by common invariant tori of $H$ and $L$. 
\end{enumerate}
\end{thm}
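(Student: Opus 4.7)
The plan is to break the theorem into its three claims and apply, in turn, Theorem \ref{SimConjug} and Theorem \ref{main_hamiltonians}, together with an analyticity argument on the $2$-form $dH\wedge dL$ that effects the dichotomy in the second assertion.

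For assertion (1), I would apply Theorem \ref{SimConjug} with $L_1 := H$ and $L_2 := L$. By definition of the KAM curve $T_H$, the rotation vector on $T_{\omega_0}$ under $\Phi_H^t$ is $\omega^1 = \omega_0$, while the rotation vector under $\Phi_L^t$ is $\omega^2 = \omega^L$. In $d = 2$, linear independence of two vectors is equivalent to non-collinearity, so the hypothesis $\omega^L\nparallel\omega_0$ supplies exactly the input of Theorem \ref{SimConjug} and yields the simultaneous symplectic conjugation of $H$ and $L$ to completely integrable Hamiltonians near $T_{\omega_0}$.

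For assertion (2), assume $\omega^L\parallel\omega_0$. The collinearity of $X_H$ and $X_L$ on $T_{\omega_0}$ is precisely statement (2) of Theorem \ref{main_hamiltonians}. By statement (3) of the same theorem combined with Proposition \ref{basics_Hamiltonians}, there is a connected open neighbourhood $U$ of $T_{\omega_0}$ on which $\{H,L\}\equiv 0$. The key observation is that $dH\wedge dL$ is a real analytic $2$-form on $U$: either it vanishes identically on some smaller neighbourhood of $T_{\omega_0}$, or its zero set is nowhere dense there. In the first case I would use that $dH$ does not vanish near $T_{\omega_0}$ (the identity $d\mathcal{H}(\theta,0) = (0,\omega_0)$ being extracted from the proof of Theorem \ref{main_hamiltonians}) to deduce that $H$ may be used as a local analytic coordinate along whose level sets $L$ is constant, yielding an analytic $\varphi$ with $L = \varphi\circ H$. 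In the second case the set $W := \{dH\wedge dL\neq 0\}$ is open and dense in every neighbourhood of $T_{\omega_0}$, and on each connected component of $W$ the pair $(H,L)$ consists of two analytic integrals in involution that are functionally independent; applying Arnold-Liouville-Mineur (Theorem \ref{arnold_liouville}) to their common level sets on such a component then produces the advertised open sets foliated by common invariant $2$-tori.

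The main obstacle lies in this second branch of the dichotomy: Theorem \ref{arnold_liouville} requires the connected components of the relevant level sets of $(H,L)$ to be compact, which is not automatic on $W$. I would handle this by leveraging the proximity of these fibers to the compact Lagrangian torus $T_{\omega_0}$: continuity of $(H,L)$ together with compactness of $T_{\omega_0}$ should force fibers over values close to $(H,L)(T_{\omega_0})$ to contain a compact connected component near $T_{\omega_0}$, and it is this compactness step that requires the most care.
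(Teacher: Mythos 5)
Your handling of assertion (1) and of the collinearity claim in assertion (2) is correct and is what the statement's placement in the paper suggests: for two nonzero vectors in $\R^2$ linear independence is the same as non-collinearity, so Theorem \ref{SimConjug} applied to $L_1=H$, $L_2=L$ gives (1), and Theorem \ref{main_hamiltonians}(2) gives the first half of (2). Organizing the remaining dichotomy around the real-analytic $2$-form $dH\wedge dL$ is also the right idea, and your first branch ($dH\wedge dL\equiv 0$ on a connected neighbourhood, $dH\neq 0$ near $T_{\omega_0}$, hence $L=\varphi\circ H$ by the implicit-function-theorem argument that closes the proof of Theorem \ref{main_hamiltonians}(4)) is sound. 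The gap is exactly where you suspect it, and the repair you sketch does not close it. You want compact connected components of nearby fibers of $(H,L)$ by perturbing off the fiber containing $T_{\omega_0}$; but that fiber is \emph{singular}: since $X_H$ and $X_L$ are collinear on $T_{\omega_0}$, the forms $dH=i_{X_H}\omega$ and $dL=i_{X_L}\omega$ are proportional there, so $dH\wedge dL$ vanishes identically on $T_{\omega_0}$ and the rank of $(H,L)$ drops along it. Hence $T_{\omega_0}$ need not be isolated in its own fiber (if $L=\varphi\circ H$ that fiber is a whole $3$-dimensional energy level), the usual ``stability of compact fiber components'' argument has no regular starting point, and even granting compact components of nearby fibers you would still need them to avoid the zero set of $dH\wedge dL$ before Theorem \ref{arnold_liouville} applies.

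The fix is to take the KAM tori of $H$ themselves as the compact regular fibers. Since $\{H,L\}=0$ on a neighbourhood of $T_{\omega_0}$ (Theorem \ref{main_hamiltonians}(3) plus Proposition \ref{basics_Hamiltonians}), $L$ is invariant under $\Phi^t_H$ and therefore constant on every KAM torus $T_\omega$ of $H$ contained in that neighbourhood, because the translation flow by a Diophantine vector is minimal; thus each such $T_\omega$ lies inside a common level set of $(H,L)$. Moreover $(\Phi^t_H)^*(dH\wedge dL)=dH\wedge dL$, so the zero set of $dH\wedge dL$ is $\Phi^t_H$-invariant and, again by minimality, meets each $T_\omega$ either in all of it or not at all. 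If it contains every KAM torus of $H$ inside some neighbourhood of $T_{\omega_0}$, then the analytic form $dH\wedge dL$ vanishes on a set of positive measure (the union of those tori, which fills positive measure in any neighbourhood of $T_{\omega_0}$ by the KAM theorem, after shrinking $\gamma$ if necessary) and hence vanishes identically, placing you in the first branch. Otherwise every neighbourhood of $T_{\omega_0}$ contains a KAM torus $T_\omega$ on which $dH\wedge dL$ is nowhere zero; such a $T_\omega$ is a compact connected component of a regular common level set of the two commuting, independent integrals $H,L$, and Theorem \ref{arnold_liouville} applied around $T_\omega$ yields the advertised open connected set foliated by common invariant tori.
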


\begin{cor}
Let $H_0$, $\mathcal{U}_{\gamma, \tau}$ as in Theorem \ref{KAM_Hamiltonian}. Suppose $H, L \in \mathcal{U}_{\gamma, \tau}$. If $T_H = T_L$ on a $C^\infty$-uniqueness set then $H = K$.
\end{cor}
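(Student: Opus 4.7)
The plan is to reduce the corollary to Theorem~\ref{main_hamiltonians}(4) applied to the pair $(H, L)$ and then extract one extra piece of information from the hypothesis in order to pin down the analytic function it produces.

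First I translate the hypothesis. If $T_H(\omega) = T_L(\omega)$ for every $\omega$ in a $C^\infty$-uniqueness set $\Gamma \subset \dioph$ at some $\omega_0 \in \Gamma$, then by definition of the KAM curves the common graph $T_\omega$ is a KAM torus for both $H$ and $L$ with rotation vector exactly $\omega$. In particular the rotation vectors are not just collinear but actually equal, so Theorem~\ref{main_hamiltonians}(4) produces an analytic function $\varphi$ of one real variable and a neighbourhood $W$ of $T_{\omega_0}$ on which $L = \varphi \circ H$.

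Next I would upgrade $\varphi$ to a translation. Differentiating $L = \varphi \circ H$ yields $X_L = \varphi'(H)\, X_H$, so the rotation vector of $L$ on any KAM torus $T_\omega \subset W$ of $H$ equals $\varphi'(H|_{T_\omega})\,\omega$. Matching this with the prescribed rotation vector $\omega$ and using $\omega \neq 0$ (which is forced by $\omega \in \dioph$), I obtain $\varphi'(H|_{T_\omega}) = 1$ for every $\omega \in \Gamma$ close to $\omega_0$. Using the conjugacy $\Sigma$ from Theorem~\ref{KAM_Hamiltonian}, the Whitney-smooth map
\[
\mathcal{E}(\omega) := H|_{T_\omega} = [\mathcal{H}] \circ h^{-1}(\omega), \hspace{0.5cm} \mathcal{H} = H \circ \Sigma,
\]
has differential $\omega_0 \cdot Dh^{-1}(\omega_0)$ at $\omega_0$, a nonzero covector because $\omega_0 \neq 0$ and $Dh^{-1}(\omega_0)$ is invertible. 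I claim that $\mathcal{E}(\Gamma)$ accumulates at $\mathcal{E}(\omega_0)$: otherwise $\mathcal{E} - \mathcal{E}(\omega_0)$ would be a smooth function vanishing on $\Gamma$ in a neighbourhood of $\omega_0$ but with nonzero differential at $\omega_0$, violating the definition of a $C^\infty$-uniqueness set. Since $\varphi'$ is analytic in one variable and equals $1$ on a set accumulating at $\mathcal{E}(\omega_0)$, it must be identically $1$, so $\varphi(x) = x + c$ for some constant $c$.

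Finally, this gives $L = H + c$ on $W$ and, since $H$ and $L$ are real analytic on the connected domain $\RDom{s}$, analytic continuation promotes the identity to all of $\RDom{s}$; as the additive constant $c$ does not affect the Hamiltonian vector field, this is the intended identity ``$H = L$'' in the statement (the letter $K$ being a typo for $L$). The only step that goes beyond a mechanical application of Theorem~\ref{main_hamiltonians}(4) is the accumulation argument for $\mathcal{E}(\Gamma)$, which I expect to be the main technical point.
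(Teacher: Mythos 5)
Your argument is correct in substance, but it takes a more roundabout route than the one the paper relies on. The corollary is stated without proof, and the shortest derivation from the material in the paper is to rerun the proof of Theorem \ref{main_hamiltonians}(4) rather than to invoke its conclusion: when the rotation vectors on the common tori actually coincide (not merely are collinear), the proportionality factor $\gamma$ appearing in that proof is identically $1$, so $\partial_I[\mathcal{L}] = \partial_I[\mathcal{H}] + \oir$, hence $X_{\mathcal{L}} = X_{\mathcal{H}} + \oir$, and the flat-plus-analytic argument of Lemma \ref{FlatAnalytic} (applied to the flows, or directly to the analytic $1$-form $d(L-H)$, which then vanishes to infinite order along $T_{\omega_0}$) gives $X_L = X_H$ on the whole connected domain. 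You instead treat part (4) as a black box, obtain $L = \varphi \circ H$ near $T_{\omega_0}$, and then use the equality of rotation vectors a second time to force $\varphi' \equiv 1$ via the one-variable identity theorem. This is a legitimate alternative and has the merit of using only the stated conclusion of the theorem; the price is the extra accumulation argument for $\mathcal{E}(\Gamma)$, which the direct route avoids entirely.

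Two small points. First, in the accumulation step the function $\mathcal{E} - \mathcal{E}(\omega_0)$ is only known to vanish on $\Gamma \cap V$ for some neighbourhood $V$ of $\omega_0$, whereas the paper's definition of a $C^\infty$-uniqueness set tests functions vanishing on all of $\Gamma$. You need to observe that the property localizes: multiplying by a bump function equal to $1$ near $\omega_0$ and supported in $V$ produces a smooth function vanishing on all of $\Gamma$ with the same jet at $\omega_0$, so $\Gamma \cap V$ is again a uniqueness set at $\omega_0$ and the contradiction with $D\mathcal{E}(\omega_0) \neq 0$ goes through. This is easy but should be said. Second, your observation about the additive constant is correct and worth making explicit: since $T_H = T_{H+c}$ for any constant $c$, the literal conclusion $H = L$ is unattainable from the hypothesis, and the corollary (whose ``$K$'' is a typo for $L$) can only mean that $H - L$ is constant, equivalently $X_H = X_L$; both your argument and the direct one deliver exactly that.
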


\subsection{Symplectic conjugacy in Theorem \ref{SimConjug}}
\label{sc: upgrade_conjugacy}

Let us discuss how to modify the proof of Theorem \ref{SimConjug} so that the diffeomorphism conjugating simultaneously the Hamiltonians is a symplectomorphism. First we adapt a result of H. Eliasson \cite[Proposition 3]{eliasson_normal_1990}, a `Darboux Lemma' preserving a Lagrangian foliation. 

\begin{prop}
\label{LagFol}
Let $U \subset \R^d$ be an open neighbourhood of $0$. Suppose $\omega$ is an exact symplectic form on $\td \times U$ such that the foliation $\mathcal{F} = \lbrace \T^d \times \{ I\}\rbrace_{I \in U}$ is Lagrangian and 
\begin{equation}
\label{LagFolHyp}
 \omega\mid_{ \td \times \{ 0 \}} = \omega_{\std}\mid_{\td \times \{ 0 \}}
 \end{equation}
 where $\omega_{\std}$ denotes the canonical symplectic form in $\td \times U$.  Then there exist an open neighbourhood $V \subset U$ of $\td \times \{0\}$ and an analytic diffeomorphism $\phi: \td \times V \rightarrow \td \times V$  preserving the foliation $\mathcal{F}$ and such that \[ \phi ^{\ast}\omega = \omega_{\std}.\]
\end{prop}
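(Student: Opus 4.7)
My plan is a Moser homotopy argument adapted to preserve the Lagrangian foliation $\mathcal{F}$. The key observation is that, if a $1$-form $\beta$ lies in the annihilator of the vertical distribution, i.e.\ has the form $\beta = \sum_j \mu_j(\theta, I)\, dI_j$, then the equation $i_{X_t}\omega_t = -\beta$ automatically forces $X_t$ to be tangent to the leaves of $\mathcal{F}$, since the vertical distribution is Lagrangian for every $\omega_t := (1-t)\omega_{\std} + t\omega$. The substantive task is therefore to construct such a primitive of $\omega - \omega_{\std}$; this hinges on the exactness of $\omega$ and on the matching condition (\ref{LagFolHyp}).

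\textbf{Cohomological normalization.} I would begin by writing $\omega = d\alpha$ for some analytic $1$-form $\alpha$ and decomposing $\alpha = \sum_i \alpha_i(\theta, I)\, d\theta_i + \sum_j \tilde\alpha_j(\theta, I)\, dI_j$. The Lagrangian condition on $\mathcal{F}$ amounts to the closedness on $\td$ of $\sum_i \alpha_i(\cdot, I)\, d\theta_i$ for every fixed $I$, so this form has a well-defined cohomology class $[\alpha]_I \in H^1(\td; \R) \cong \R^d$. Using (\ref{LagFolHyp}) and the identity $\partial_{I_j} \alpha_k - \partial_{\theta_k} \tilde\alpha_j = \delta_{jk}$ at $I = 0$ (from the $dI_j \wedge d\theta_k$ coefficient of $\omega_{\std}$), a direct averaging in $\theta$ gives $D[\alpha](0) = \id$; after adjusting $\alpha$ by a constant $1$-form we may assume $[\alpha](0) = 0$. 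Thus $I \mapsto [\alpha]_I$ has an analytic local inverse $g$ near $0$, and the fiber-preserving analytic diffeomorphism $\phi_1(\theta, I) := (\theta, g(I))$ fixes $\td \times \{0\}$; the pullback $\omega' := \phi_1^* \omega$ still satisfies (\ref{LagFolHyp}) and admits the primitive $\alpha' := \phi_1^* \alpha$ with $[\alpha']_I = [\alpha]_{g(I)} = I = [\alpha_{\std}]_I$ on every leaf.

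\textbf{Horizontal primitive and Moser's trick.} Since $\alpha' - \alpha_{\std}$ is closed and cohomologically trivial on every leaf, its restriction to each $\td \times \{I\}$ is exact, and the pointwise antiderivatives assemble into an analytic function $G(\theta, I)$ (e.g.\ via Fourier inversion on $\td$) for which
\[ \beta := \alpha' - \alpha_{\std} - dG = \sum_j \mu_j(\theta, I)\, dI_j \]
has no $d\theta$ components. From $d\beta = \omega' - \omega_{\std}$ and (\ref{LagFolHyp}) one reads off $\partial_{\theta_k} \mu_j(\theta, 0) = 0$, so each $\mu_j(\cdot, 0)$ is a constant $c_j$, and subtracting the exact $1$-form $d\bigl(\sum_j c_j I_j\bigr)$ gives a horizontal primitive satisfying $\beta|_{\td \times \{0\}} = 0$. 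With this $\beta$ in hand, $\omega_t$ is symplectic on a neighbourhood of $\td \times \{0\}$ by (\ref{LagFolHyp}), the analytic time-dependent vector field $X_t$ defined by $i_{X_t}\omega_t = -\beta$ is tangent to $\mathcal{F}$ and vanishes on $\td \times \{0\}$, and its flow $\phi_2^t$ is therefore defined for $t \in [0,1]$ on a smaller neighbourhood, fixes $\td \times \{0\}$, and preserves $\mathcal{F}$. The usual Moser computation yields $(\phi_2^1)^*\omega' = \omega_{\std}$, and $\phi := \phi_1 \circ \phi_2^1$ is the desired analytic foliation-preserving diffeomorphism on an appropriate neighbourhood $\td \times V$ of $\td \times \{0\}$.

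\textbf{Main obstacle.} The delicate point is producing the primitive $\beta$ with no $d\theta$ components. The naive radial Poincaré primitive in the $I$-variable yields a $1$-form whose $d\theta$ contributions have nontrivial periods along the torus cycles; these periods are precisely the cohomological obstructions neutralized by the fiber-preserving reparameterization in the first step, and both that step and the subsequent normalization $\beta|_{\td \times \{0\}} = 0$ rest essentially on the exactness of $\omega$ and on the matching condition (\ref{LagFolHyp}).
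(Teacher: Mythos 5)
Your proof is correct and runs on the same engine as the paper's — Moser's homotopy $\omega_t = \omega_{\std} + t(\omega - \omega_{\std})$ driven by a primitive of $\omega - \omega_{\std}$ adjusted so that the Moser vector field respects $\mathcal{F}$ — but the two arguments diverge in how that adjustment is made, and yours is the more careful of the two. The paper never changes the action variable: it only subtracts $df$ for a function $f$ on the torus, arranging that the components $\beta(\partial_{\theta_i})$ be constant in $\theta$ (the periods of the closed leafwise form $(\alpha-\alpha_{\std})\mid_{\td\times\{I\}}$ survive as these constants), and the flow is then meant to permute the leaves. You instead spend a preliminary fiber-preserving diffeomorphism $\phi_1(\theta,I)=(\theta,g(I))$ to normalize those periods to the standard ones — this cohomological step has no counterpart in the paper, and (\ref{LagFolHyp}) is exactly what makes $I\mapsto[\alpha]_I$ invertible near $0$ with $D\phi_1=\id$ along the zero section — after which the $d\theta$-part of $\alpha'-\alpha_{\std}$ can be killed outright, leaving a purely vertical $\beta=\sum_j\mu_j\,dI_j$ that moreover vanishes on $\td\times\{0\}$. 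This buys two things the paper's write-up leaves implicit or shaky: since $T\mathcal{F}$ is Lagrangian for every $\omega_t$, the equation $i_{X_t}\omega_t=-\beta$ with $\beta$ annihilating $T\mathcal{F}$ forces $X_t\in T\mathcal{F}$, so each leaf is preserved individually (with only a constant, generally nonzero $\beta(\partial_\theta)$, the $dI$-component of $X_t$ is obtained by contracting against the $\theta$-dependent pairing $\omega_t(\partial_I,\partial_\theta)$ and its $\theta$-independence is not automatic); and the vanishing of $\beta$ on the zero section guarantees the time-one flow exists on a neighbourhood of $\td\times\{0\}$, a point the paper merely assumes. The price is the extra reparameterization $\phi_1$, which is harmless: it visibly preserves $\mathcal{F}$, and its identity derivative along $\td\times\{0\}$ keeps (\ref{LagFolHyp}) intact for the pulled-back form. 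Modulo the usual sign bookkeeping between $d\theta\wedge dI$ and $dI\wedge d\theta$ in the normalization $D[\alpha](0)=\id$, I see no gap.
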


\begin{proof} We will define the diffeomorphism $\phi$ by applying Moser's trick.  Define 
\[ \omega _t = \omega_{\std} + t(\omega - \omega_{\std}).\]
 Notice that $\omega_0 = \omega_{\std}$ and that $\omega _t$ is symplectic for all $t \in [0,1]$ in some open neighbourhood $U$ of $\td \times \lbrace 0 \rbrace$. We may assume WLOG this neighbourhood to be $U$. Let $\beta \in \Omega^1(\td \times V)$ be such that $d\beta = \omega - \omega_{0}$ and define $X_t$ as the unique time dependent vector field $X_t$ obeying
\begin{equation}
\label{beta_equation}
i_{X_t} \omega_t = \beta.
\end{equation}
Let $\alpha, \alpha _0$ be primitives of $\omega, \omega_{0}$ respectively.  We will define $\beta$ of the form 
\begin{equation}
\label{beta_form}
\beta = \alpha - \alpha_0 - df,
\end{equation}
where $f$ is a real analytic function on $\td$, so that the time one map $\phi$ associated to the vector field $X_t$ verifies the proposition. Let us denote by $\phi_t$ the flow associated to $X_t$. Recall that by Cartan's magic formula 
\[ \frac{d}{dt}(\phi_t ^*\omega_t )= 0.\]
Thus, assuming $\phi = \phi_1$ is well defined, we have
\[\phi^*\omega = \phi_1^*\omega_1  = \phi_0^*\omega_{0} = \omega_\std.\] 
Therefore, to prove the proposition it suffices to define $f$ (and hence $\beta$) so that the flow $\phi_t$ preserves the foliation $\mathcal{F}$.

Denote by $X_i = \frac{\partial}{\partial \theta _i}$ the coordinate vector fields. Then  $\phi_t$ preserves the foliation $\mathcal{F}$ if and only if $X_t (dI_i) $ is independent of $\theta$ for every $i =1,\dots,d$. By  (\ref{beta_equation}), the last condition is satisfied if and only if $\beta$ depends only on $I$. In other words, the flow $\phi_t$ preserves $\mathcal{F}$ if and only if  $\beta (X_i)  \text{ is constant for every } i = 1, \dots, d.$ 

Denote $g_i = (\alpha - \alpha_0) (X_i)$ and $g = (g_1,\dots,g_d)$. Since we suppose $\beta$ is of the form (\ref{beta_form}) it follows that the flow $\phi^t$ preserves $\mathcal{F}$ if and only if  
\begin{equation}
\label{beta_constant}
g - \nabla f = constant.
\end{equation}
 The foliation $\mathcal{F}$ being Lagrangian for $\omega$ is equivalent to 
\[ \omega (X_i,X_j) = 0.\] 
Since $[X_i,X_j] = 0$ this yields to
\[ X_j(\alpha (X_i)) = X_i(\alpha(X_j)).\]
Note that the previous equations also hold if we replace $\omega$ and $\alpha$ by $\omega _0$ and $\alpha _0$ respectively. Thus 
\[ \dfrac{\partial g_i}{\partial \theta _j} = \dfrac{\partial g_j}{\partial \theta _i}.\]
Hence there exist a function $h: \R ^d \rightarrow \R$ and constants $b, c \in \R^d$ such that 
\[ \widetilde{g} = \nabla h + b, \hskip1cm h(x + e_i) = h(x) + c_i,\]
where $\widetilde{g}$ is the lift of $g$ to $\R ^d$ and $e_1, \dots, e_d$ denote the canonic base in $\R^d$. Therefore $\widetilde{f}: \R^d \rightarrow \R$ given by \[ \widetilde{f}(x) = h(x) - \langle c , x \rangle \] is the lift of a well-defined on $\T^d$. Define $f$ as the function induced by $\widetilde{f}$ and notice that in this case (\ref{beta_constant}) is satisfied. Hence the flow $\phi_t$ preserves the $\mathcal{F}$ and the proposition follows. 
\end{proof}

We would like to apply the last result to the symplectic form  $\omega = \psi^*\omega_{\std}$ where $\psi$ is the analytic diffeomorphism obtained in the proof of Theorem \ref{SimConjug}. In that case, $\psi \circ \phi$ would be the desired symplectic map. Nevertheless (\ref{LagFolHyp}) does not necessarily holds for $\psi^*\omega_{\std}$ and thus Proposition \ref{LagFol} cannot be applied. We will fix this by modifying the diffeomorphism $\psi$ as follows.

\begin{proof}[Addendum to the proof of Theorem \ref{SimConjug}]
Let $\Sigma, h$ as in Theorem \ref{KAM_Hamiltonian} when applied to $L_1$. We can assume WLOG that $\Sigma$, $h$ are well-defined smooth functions on open neighbourhoods of $\td \times K$ and $K$ respectively and that $h^{-1}(\omega_0) = 0$. We denote $\Sigma = (\Sigma_1, \Sigma_2)$ and define $g : \td \rightarrow \td$, $\gamma: \td \rightarrow \R^d$ by 
\[ g(\theta) := \Sigma_1(\theta,0), \hspace{1cm} \gamma(\theta) := \Sigma_2(\Sigma_1^{-1}(\theta,0)).\]
Notice that $g \in \textup{Diff}^\omega(\td)$ because $T_{\omega_0} = \Sigma (\td \times \{ 0 \})$ is a Lagrangian graph and thus $\gamma \in C^\omega(\td, \rd)$ is well defined. Let $\phi_1, \phi_2 : \TR \rightarrow \TR$ 
\[ \phi_1(\theta, I) = (\theta, \gamma(\theta) + I), \hspace{1cm} \phi_2(\theta, I) =\left(g(\theta), \partial_\theta g^{-1}(\theta)^T I\right).\]
 These two mappings are symplectic with respect to $\omega_{\std}$, where $\omega_{\std}$ denotes the standard symplectic form in $\TR$ (see \cite[Lemma 1.2.4]{bost_tores_1986}). Notice that $T_{\omega_0} = (\phi_1 \circ \phi_2) (\td \times \{ 0 \}).$  Denote 
 \[ \mathcal{L}_i = L_i \circ \Sigma, \hskip1cm \overline{L}_i := L_i \circ \phi_1 \circ \phi_2,\]
 for all $i = 1, \cdots, d$. Let us recall equation (\ref{FlatVectorField})
 \[X_{\mathcal{L}_i}(\theta,0) = (\partial_I\mathcal{L}_i(\theta,0), -\partial_\theta\mathcal{L}_i(\theta,0)) = (\omega^i,0),\]
 which we obtained previously in the proof of Theorem \ref{SimConjug}. It follows that for all $\theta \in \td$ and all $t \in \R$ 
\begin{align*}
(\theta + t\omega_i, 0) & = \Phi^t_{\mathcal{L}_i}(\theta, 0) \\
& = \Phi^t_{L_i \circ \phi_1 \circ \phi_1^{-1} \circ \Sigma}(\theta, 0) \\
& = \left(\Sigma^{-1} \circ \phi_1 \circ \Phi^t_{L_i \circ \phi_1} \circ \phi_1^{-1} \circ \Sigma\right) (\theta, 0) \\
& = \left( \phi_2^{-1} \circ \Phi^t_{L_i \circ \phi_1} \circ \phi_2 \right) (\theta, 0) \\
& = \Phi^t_{\overline{L}_i}(\theta,0).
\end{align*}
Thus (\ref{FlatVectorField}) holds if we replace $\mathcal{L}_i$ by $\overline{L}_i$, namely
\begin{equation}
\label{eq: L_derivative}
X_{\overline{L}_i}(\theta, 0) = (\partial_I \overline{L}_i (\theta, 0), -\partial_\theta \overline{L}_i (\theta, 0)) =  (\omega^i, 0).
\end{equation}
Let \[ \overline{\mathbf{L}} = (\overline{L}_1,\dots, \overline{L}_{d}), \hspace{1cm} M = \left( \begin{array}{cccc} \omega^1 \\ \omega^2 \\ \dots \\ \omega^{d} \end{array} \ \right)^{-1},\] 
and define
\[ \Function{\overline{\Psi}}{\td \times V}{\td \times \rd}{(\theta, I)}{(\theta, M\overline{\mathbf{L}}(\theta, I))}\]
where $V$ is a neighbourhood of $0$ such that $\phi_1 \circ \phi_2 (V) \subset I$. By (\ref{eq: L_derivative}) 
\[ D\overline{\Psi} (\theta, 0) = I_{2d}\]
 for all $\theta \in \td$. Hence, $\overline{\Psi}$ is a local diffeomorphism on a neighbourhood $U$ of $\td \times \{ 0 \}$ and, up to consider a smaller neighbourhood, we can suppose WLOG that $\overline{\Psi}\mid_U$ is a diffeomorphism to its image. As before, we can show that for all $i = 1, \dots, d$ the Hamiltonian flow associated to $\overline{L}_i \circ \Psi^{-1}$ preserves all the tori of the form $\td \times \{ I\}$ contained in $\overline{\Psi}(U)$. Since the Hamiltonian is analytic this implies the complete integrability of the system. Notice that the symplectic form 
\[\omega = \left(\overline{\Psi}^{-1}\right)^*\omega_{\std}\]
satisfies (\ref{LagFolHyp}). Hence by Proposition \ref{LagFol} there exists an analytic diffeomorphism $\phi$, preserving all the tori of the form $\td \times \{I\}$, such that $\phi^*\omega = \omega_{\std}$. Thus $\psi = \overline{\Psi}^{-1} \circ \phi$ is the desired symplectic transformation.
\end{proof}

\bibliographystyle{AIMS.bst}
\bibliography{analytic.bib}

\end{document}